\pgfplotsset{compat=1.11}
\newtheorem{theorem}{Theorem}[section]
\newtheorem{definition}[theorem]{Definition}
\newtheorem{lemma}[theorem]{Lemma}
\newtheorem{corollary}[theorem]{Corollary}
\newtheorem{proposition}[theorem]{Proposition}
\newtheorem{remark}[theorem]{Remark}
\newtheorem{example}[theorem]{Example}
\providecommand{\R}{\ensuremath{\mathbb{R}}}
\providecommand{\C}{\ensuremath{\mathcal{C}}}	
\providecommand{\Cd}[1]{\C_{#1}}				
\providecommand{\rbraces}[1]{\left( #1 \right)} 		
\providecommand{\cbraces}[1]{\left[ #1 \right]}			
\providecommand{\curly}[1]{\left\{ #1 \right\}} 		
\providecommand{\abs}[1]{\left\lvert #1 \right\rvert} 	
\providecommand{\set}[2]{\left\{ #1 \mid #2 \right\}} 
\providecommand{\indFunc}[1]{\mathbbm{1}_{#1}} 
\providecommand{\lebesgue}[3]{\int\limits_{#2} #1 \ \mathrm{d}#3}		
\providecommand{\rInt}[4]{\int\limits_{#2}^{#3} #1 \ #4}				
\providecommand{\cInt}[4]{\rInt{#1}{#2}{#3}{\mathrm{d}#4}}				
\providecommand{\ir}[1]{#1^{SI}}			
\providecommand{\dr}[1]{#1^{SD}}			
\providecommand{\CB}[1]{{C}^{\#}(#1)}		
\providecommand{\ordSum}[2]{\left\langle #1 \, , \, #2 \right\rangle}	
\begin{document}

\begin{frontmatter}

\title{Stochastic monotonicity and the Markov product for copulas\tnoteref{t1,t2}}
\tnotetext[t1]{\textcopyright 2021. This manuscript version is made available under the CC-BY-NC-ND 4.0 license \url{http://creativecommons.org/licenses/by-nc-nd/4.0/}.}
\tnotetext[t2]{Accepted for publication in the Journal of Mathematical Analysis and Applications (\hyperlink{https://doi.org/10.1016/j.jmaa.2021.124942}{10.1016/j.jmaa.2021.125348}).}

\address[tud]{Faculty of Mathematics, TU Dortmund University, Germany}%
\author[tud]{Karl Friedrich Siburg}\corref{correspondingauthor}%
\cortext[correspondingauthor]{Corresponding author.}%
\author[tud]{Christopher Strothmann\fnref{scholarship}}%
\fntext[scholarship]{Supported by the German Academic Scholarship Foundation.}%

\begin{abstract}
Given two random variables $X$ and $Y$, stochastic monotonicity describes a monotone influence of $X$ on $Y$. 
We prove two different characterizations of stochastically monotone $2$-copulas using the isomorphism between $2$-copulas and Markov operators. 
The first approach establishes a one-to-one correspondence between stochastically monotone copulas and monotonicity-preserving Markov operators.
The second approach characterizes stochastically monotone copulas by their monotonicity property with respect to the Markov product. 
Applying the latter result, we identify all idempotent stochastically monotone copulas as ordinal sums of  the independence copula $\Pi$.
\end{abstract}

\begin{keyword}
Stochastic monotonicity, Copula, Markov product, Markov operator
\MSC[2010] 37A30 \sep 62H05  \sep  62H05
\end{keyword}
\end{frontmatter}

%
%
%


\section{Introduction}

Describing the relationship between random variables is at the core of dependence modelling. 
In this work, we will focus on stochastic monotonicity, a concept that captures a monotone influence of one random variable $X$ on another one $Y$. 
More precisely, we call $Y$ \emph{stochastically increasing} in $X$ whenever the corresponding conditional distribution functions are pointwise decreasing, i.e.\ one has
\begin{equation*} \label{eq:introduction_stoch_mon}
	\mathbb{P} (Y \leq y \; | \; X = x_2) \leq \mathbb{P} (Y \leq y \mid X = x_1)
\end{equation*}
for almost all $x_1 \leq x_2$ in the support of $X$; in other words, $[Y \mid X = x_1]$ is stochastically dominated by $[Y \mid X = x_2]$.
Similarly, $Y$ is stochastically decreasing in $X$ if $\mathbb{P} (Y \leq y \mid X = x)$ is increasing in $x$. 

Stochastic monotonicity has become an important tool in applications, e.g.\ to study the long-term behaviour of economic models such as stochastic recursions of the form $X_{n+1} = f(X_n, Z_n)$ a.s. (see \cite{Stokey.1989} or \cite{Foss.2018} for details). 
Concrete data examples which exhibit stochastic monotonicity are given by the connection between expenditures and income of a household or the income mobility from one generation to the next, and can be found in \cite{Lee.2009}.

To investigate the connection between stochastic monotonicity and the Markov product for copulas, it is necessary to assume that $X$ and $Y$ are continuous random variables.
In this case, $X$ and $Y$ have a unique copula $C$ defined on $[0,1]^2$, and $Y$ is stochastically increasing, respectively decreasing, in $X$ if and only if $u \mapsto \partial_1 C(u, v)$ is decreasing, respectively increasing, for almost all $u\in [0,1]$ and all $v \in [0, 1]$.
It is well-known that this is equivalent to the concavity, respectively convexity, of $C$ with regard to the first component; any such copula will be called \emph{stochastically monotone}. The concept of stochastic monotonicity for random variables or copulas has been investigated in the literature under various different terms, including \emph{conditionally increasing property} (\cite{Mueller.2001}), \emph{positive regression dependence} (\cite{Lehmann.1959}), and \emph{concavity in $u$}. 

Related, but much more restrictive, is the notion of componentwise concavity requiring that a copula $C$ is concave in \emph{each} component, when the other one is held fixed; see, e.g. \cite{Alvoni.2007}, \cite{Durante.2009} and \cite{Dolati.2014}.
One obvious drawback of this concept is the loss of a directed influence between random variables, as it suggests a circular interaction: an increase in $X$ leads to an increase in $Y$, which in turn leads to higher values in $X$, and so on. 

The aim of this work is to characterize stochastically monotone copulas and their corresponding Markov operators, and to investigate the connection between stochastic monotonicity and the Markov product of copulas introduced in \cite{Darsow.1992} as 
\begin{equation*}
	(C_1*C_2) (u, v)	= \cInt{\partial_2 C_1(u, t) \partial_1 C_2(t, v)}{0}{1}{t} ~.
\end{equation*}
Central to our study is the result by \cite{Olsen.1996} stating that the set of copulas endowed with the Markov product is isomorphic to the set of Markov operators on $L^1([0, 1])$ equipped with the composition. 

Building upon this isomorphism, we show in Theorem~\ref{thm:characterization} that stochastically monotone copulas are in one-to-one correspondence with monotonicity-preserving Markov operators. As a by-product, this implies that the set of stochastically monotone copulas is closed under the action of the Markov product.

Next, Theorem~\ref{thm:characterization_mp_si} characterizes stochastically increasing copulas by their monotonicity property under the Markov product with respect to the stochastic dominance ordering.
More precisely, we prove that a copula $C$ is stochastically increasing if and only if $D * C \leq C$ holds pointwise for all copulas $D$.  
This is used in Theorem~\ref{thm:sm_copula} to identify all idempotent (i.e., $C*C=C$) stochastically monotone copulas as ordinal sums of the independence copula $\Pi$. 

Finally, we apply that the algebraic property of a copula $C$ being idempotent translates into the stochastic property of its Markov operator $T_C$ being a conditional expectation on $L^\infty([0, 1], \mathcal{B}([0, 1]), \lambda)$. This implies Theorem~\ref{thm:conditional_expectation_char} stating that a conditional expectation is monotonicity-preserving or -reversing if and only if it is pointwise an averaging operator. 

The paper is structured as follows.
Section~\ref{section:notation} introduces the necessary notation and definitions. 
Section~\ref{section:characterization} provides a characterization of stochastically monotone copulas in terms of Markov operators as well as some topological closure properties of stochastically monotone copulas.
Section~\ref{section:markov_product} contains the aforementioned characterization of stochastic monotonicity in terms of the Markov product.
Section~\ref{section:idempotents} uses the previous characterizations to identify the idempotent, stochastically monotone copulas as ordinal sums of $\Pi$.
\section{Notation and preliminaries} \label{section:notation}

In this section, we introduce our notation and collect some preliminary results.

A $2$-copula is a bivariate distribution function on $[0, 1]^2$ with uniform margins.
We refer to the lower Fréchet-Hoeffding-bound by $C^-$, to the product copula by $\Pi$ and to the upper Fréchet-Hoeffding-bound by $C^+$; see \cite{Nelsen.2006} for details.
\cite{Darsow.1992} first introduced a product structure on the set of all bivariate copulas $\Cd{2}$, which constitutes a continuous analogue to the multiplication of doubly stochastic matrices.

\begin{definition}
The Markov product of two $2$-copulas $C_1,C_2$ is the $2$-copula
\begin{equation*}
	(C_1 * C_2) (u, v) := \cInt{\partial_2 C_1(u, t) \partial_1 C_2(t, v)}{0}{1}{t} ~.
\end{equation*}
\end{definition}

Note that the partial derivative of a copula $C$ is only defined almost everywhere. 
The Markov product has been applied, for example, in the study of extremal points of $\Cd{2}$ (see, e.g., \cite{Darsow.1992}) and the treatment of complete dependence. 
Furthermore, $(\Cd{2}, *)$ is closely linked to a class of integral-preserving linear operators equipped with the composition.

\begin{definition} \label{def:markov_operator}
A linear operator $T: L^1([0, 1]) \rightarrow L^1([0, 1])$ is called a Markov operator if
\begin{enumerate}
	\item $T$ is positive, that is $Tf \geq 0$ if $f \geq 0$.
	\item $T$ has the fixed point $\indFunc{[0, 1]}$.\label{def:markov_operator_fixpoint} 
	\item $T$ is integral-preserving, i.e.
	\begin{equation*}
		\cInt{Tf(t)}{0}{1}{t} = \cInt{f(t)}{0}{1}{t} 
	\end{equation*}
	holds for all $f \in L^1([0, 1])$.
\end{enumerate}
\end{definition}

\cite{Olsen.1996} established the existence of an isomorphism between $2$-copulas and Markov operators which translates the Markov product into the composition of the corresponding Markov operators.

\begin{theorem} \label{thm:cop_mo_correspondence}
Let $C$ be a $2$-copula and $T$ be a Markov operator. Then
\begin{equation*}
	C_T(u, v) := \cInt{T\indFunc{[0, v]}(t)}{0}{u}{t} \; \text{ and } \; T_C f (u) := \partial_u \cInt{\partial_v C(u, v) f(v)}{0}{1}{v} 
\end{equation*}
define a $2$-copula and a Markov operator, respectively. 
This correspondence is one-to-one with $T_{C_T} = T$ and $C_{T_C} = C$ for all $2$-copulas $C$ and all Markov operators $T$.
Moreover, for all $2$-copulas $C_1$ and $C_2$, it holds that $$T_{C_1 * C_2} = T_{C_1} \circ T_{C_2}~.$$
\end{theorem}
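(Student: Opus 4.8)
The plan is to reduce everything to the action of the two maps on the generating family of indicator functions $\indFunc{[0, v]}$, $v \in [0, 1]$, whose linear span (the step functions) is dense in $L^1([0, 1])$. The backbone of the whole argument is a pair of dual computations. On the one hand, for a copula $C$ one has $T_C \indFunc{[0, v]}(u) = \partial_u \cInt{\partial_w C(u, w)\, \indFunc{[0, v]}(w)}{0}{1}{w} = \partial_u C(u, v) = \partial_1 C(u, v)$, since $\cInt{\partial_w C(u, w)}{0}{v}{w} = C(u, v)$ by groundedness $C(u, 0) = 0$. On the other hand, differentiating $C_T(u, v) = \cInt{T\indFunc{[0, v]}(t)}{0}{u}{t}$ in its first slot gives $\partial_1 C_T(u, v) = T\indFunc{[0, v]}(u)$ for a.e.\ $u$.

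First I would check that $C_T$ is a $2$-copula. Groundedness $C_T(0, v) = C_T(u, 0) = 0$ is immediate, the margin identity $C_T(u, 1) = u$ follows from the fixed-point property of Definition~\ref{def:markov_operator}, and $C_T(1, v) = v$ follows from integral-preservation. The $2$-increasing inequality on a rectangle $[u_1, u_2] \times [v_1, v_2]$ reduces, by linearity, to $\cInt{T\indFunc{(v_1, v_2]}(t)}{u_1}{u_2}{t} \geq 0$, which holds because $T$ is positive and $\indFunc{(v_1, v_2]} \geq 0$. To see that $T_C$ is a Markov operator, the cleanest route avoids differentiating under the integral directly: for a.e.\ $u$ the map $v \mapsto \partial_1 C(u, v)$ is nondecreasing from $0$ to $1$, hence the distribution function of a probability measure $\mu_u$ on $[0, 1]$, and one identifies $T_C f(u) = \cInt{f(w)}{0}{1}{\mu_u(w)}$. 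From this kernel representation positivity is obvious, the fixed point $T_C \indFunc{[0, 1]} = \indFunc{[0, 1]}$ follows from $\mu_u([0, 1]) = \partial_1 C(u, 1) = 1$, and integral-preservation follows by Fubini together with the uniform margin $C(1, v) = v$. The main obstacle lies precisely here: making $T_C$ well-defined into $L^1$ for \emph{every} $f \in L^1$, since the defining formula differentiates an integral whose integrand exists only almost everywhere. The kernel picture, or equivalently the fact that $u \mapsto \cInt{\partial_w C(u, w) f(w)}{0}{1}{w}$ is Lipschitz (so its a.e.\ derivative is an honest $L^1$-function and the resulting operator is an $L^1$-contraction), is what makes the extension from step functions rigorous.

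The bijection is then formal. Using $T_C \indFunc{[0, v]} = \partial_1 C(\cdot, v)$ we obtain $C_{T_C}(u, v) = \cInt{\partial_1 C(t, v)}{0}{u}{t} = C(u, v)$, and using $\partial_1 C_T(\cdot, v) = T\indFunc{[0, v]}$ we obtain that $T_{C_T}$ and $T$ agree on every $\indFunc{[0, v]}$; since both are bounded operators on $L^1$ and step functions are dense, $T_{C_T} = T$.

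Finally, for multiplicativity it suffices, by the injectivity just established, to show that $T_{C_1 * C_2}$ and $T_{C_1} \circ T_{C_2}$ coincide on the indicators $\indFunc{[0, v]}$. On one side, by the definition of the Markov product, $T_{C_1 * C_2}\indFunc{[0, v]}(u) = \partial_1 (C_1 * C_2)(u, v) = \cInt{\partial_1 \partial_2 C_1(u, t)\, \partial_1 C_2(t, v)}{0}{1}{t}$. On the other side, $T_{C_2}\indFunc{[0, v]} = \partial_1 C_2(\cdot, v)$, and applying the kernel representation of $T_{C_1}$, whose measure $\mu_u$ has ``density'' $t \mapsto \partial_1 \partial_2 C_1(u, t)$, produces the same integral. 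Agreement on the dense set of step functions together with boundedness then yields $T_{C_1 * C_2} = T_{C_1} \circ T_{C_2}$. The residual difficulty is again analytic: justifying differentiation under the integral sign in $\partial_1(C_1 * C_2)$ and the interchange of integrals, for which one invokes absolute continuity of the relevant one-variable sections and dominated convergence.
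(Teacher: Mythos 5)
The paper itself contains no proof of Theorem~\ref{thm:cop_mo_correspondence}: it is quoted as a known result from \cite{Olsen.1996} (building on \cite{Darsow.1992}). So your attempt can only be measured against the standard proof in that literature, and in its main steps it follows exactly that route and is correct: the two generating identities $T_C\indFunc{[0,v]} = \partial_1 C(\cdot,v)$ and $\partial_1 C_T(\cdot,v) = T\indFunc{[0,v]}$; the verification of groundedness, margins and $2$-increasingness of $C_T$ from positivity, the fixed point and integral preservation; the Markov kernel representation of $T_C$ (with kernel $\mu_u$ whose distribution function is $v \mapsto \partial_1 C(u,v)$) to make $T_C$ well defined and an $L^1$-contraction on all of $L^1$; and the density of step functions plus boundedness to close the bijection $T_{C_T}=T$, $C_{T_C}=C$.

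The one step that fails \emph{as written} is multiplicativity. You write $\partial_1(C_1*C_2)(u,v) = \cInt{\partial_1\partial_2 C_1(u,t)\,\partial_1 C_2(t,v)}{0}{1}{t}$ and describe the kernel $\mu_u$ of $T_{C_1}$ as having density $t \mapsto \partial_1\partial_2 C_1(u,t)$. This mixed derivative need not exist as a function: for $C_1 = C^+$ (a case genuinely in play in this paper, via completely dependent copulas and ordinal sums) the kernel $\mu_u$ is the point mass $\delta_u$, and in general $\mu_u$ can have a singular part; no absolute-continuity or dominated-convergence argument will produce such a density, so both sides of your identity are only formal. The repair is local and in fact needs \emph{less} analysis than you invoke: apply the defining formula of $T_{C_1}$ directly to the function $g := T_{C_2}\indFunc{[0,v]} = \partial_1 C_2(\cdot,v)$, which gives
\begin{equation*}
	T_{C_1}\bigl(T_{C_2}\indFunc{[0,v]}\bigr)(u)
	= \partial_u \cInt{\partial_2 C_1(u,t)\,\partial_1 C_2(t,v)}{0}{1}{t}
	= \partial_1 (C_1 * C_2)(u,v)
	= T_{C_1 * C_2}\indFunc{[0,v]}(u) ~,
\end{equation*}
where the middle expression is \emph{literally} the definition of the Markov product, so no differentiation under the integral sign and no mixed derivative are needed. (What this does use is that the defining formula of $T_{C_1}$ agrees almost everywhere with its kernel representation on bounded measurable functions such as $g$ --- exactly the consistency you already need for well-definedness, so nothing new.) With this substitution, agreement on indicators, linearity, density and $L^1$-boundedness complete the argument precisely as you outline.
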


Markov operators are closely related to conditional expectations (see, \cite{Durante.2015} and \cite{Eisner.2015} for an in-depth treatment of this connection). 

\begin{proposition} \label{prop:idempotent_CE}
For a $2$-copula $C$ and its corresponding Markov operator $T_C$, the following assertions are equivalent
\begin{enumerate}
	\item $C$ is idempotent, i.e. $C * C = C$.
	\item $T_C$ is idempotent, i.e. $T_C \circ T_C = T_C$.
	\item $T_C$ is a conditional expectation restricted to $L^\infty([0, 1], \mathcal{B}([0, 1]), \lambda)$, i.e.
	\begin{equation*}
		T_C f = \mathbb{E} (f  \mid \mathcal{G})
	\end{equation*}
	holds for all $f \in L^1$, where $\mathcal{G} := \set{A \in \mathcal{B}([0, 1])}{T_C \indFunc{A} = \indFunc{A}}$.   

\end{enumerate}
\end{proposition}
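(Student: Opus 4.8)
The plan is to prove the equivalence of the three assertions about idempotency and conditional expectations by establishing a cycle of implications, with the main conceptual content lying in the identification of an idempotent Markov operator with a conditional expectation. The equivalence of (1) and (2) is immediate from Theorem~\ref{thm:cop_mo_correspondence}: since the correspondence $C \mapsto T_C$ is a bijection satisfying $T_{C_1 * C_2} = T_{C_1} \circ T_{C_2}$, applying this with $C_1 = C_2 = C$ yields $T_{C*C} = T_C \circ T_C$, so $C*C = C$ holds if and only if $T_C \circ T_C = T_C$. The injectivity of the correspondence turns the pointwise identity of copulas into the operator identity and vice versa.

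The substantial part is the equivalence of (2) and (3). The implication (3)$\Rightarrow$(2) is the easy direction: a conditional expectation $\mathbb{E}(\,\cdot\mid\mathcal{G})$ satisfies the tower property, and since the conditioning $\sigma$-algebra is fixed, $\mathbb{E}(\mathbb{E}(f\mid\mathcal{G})\mid\mathcal{G}) = \mathbb{E}(f\mid\mathcal{G})$, which is exactly idempotency. For (2)$\Rightarrow$(3), I would first verify that $\mathcal{G} := \set{A \in \mathcal{B}([0, 1])}{T_C \indFunc{A} = \indFunc{A}}$ is a sub-$\sigma$-algebra of $\mathcal{B}([0,1])$. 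This uses that $T_C$ is a positive, linear, integral-preserving operator fixing $\indFunc{[0,1]}$: closure under complements follows from linearity together with $T_C\indFunc{[0,1]}=\indFunc{[0,1]}$, while closure under (countable) unions requires a monotone-convergence or lattice argument exploiting positivity, since idempotent positive contractions on $L^\infty$ preserve the lattice structure on their fixed space.

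Having established that $\mathcal{G}$ is a $\sigma$-algebra and that $T_C$ fixes every $\mathcal{G}$-measurable function, I would then show that $T_C$ coincides with $\mathbb{E}(\,\cdot\mid\mathcal{G})$ by verifying the two defining properties of conditional expectation: first that $T_C f$ is $\mathcal{G}$-measurable for every $f$, i.e. $T_C f$ lies in the fixed space of $T_C$ (which follows directly from idempotency, since $T_C(T_C f) = T_C f$), and second the averaging identity $\lebesgue{T_C f}{A}{\lambda} = \lebesgue{f}{A}{\lambda}$ for all $A \in \mathcal{G}$. The latter is where the integral-preserving property and the self-adjointness structure of the copula correspondence enter: testing against $\indFunc{A}$ with $A\in\mathcal{G}$ and using that $T_C$ preserves integrals and fixes $\indFunc{A}$, I would derive the required averaging relation. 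I expect the main obstacle to be the rigorous verification that $\mathcal{G}$ is closed under countable unions and that $T_C$ restricted to its fixed space is an algebra homomorphism (the multiplicativity $T_C(fg) = (T_Cf)(T_Cg)$ for $\mathcal{G}$-measurable functions), since this Kolmogorov-type characterization of conditional expectations among idempotent positive operators is the technical heart and may require invoking the standard theory of such operators as developed in the references \cite{Durante.2015} and \cite{Eisner.2015}.
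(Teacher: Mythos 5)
The paper itself offers no proof of Proposition~\ref{prop:idempotent_CE}: it is stated among the preliminaries and justified only by pointing to \cite{Durante.2015} and \cite{Eisner.2015}, so your reconstruction must be measured against the standard argument from that literature, which your outline follows. Your treatment of (1)$\Leftrightarrow$(2) via Theorem~\ref{thm:cop_mo_correspondence} (apply $T_{C_1*C_2}=T_{C_1}\circ T_{C_2}$ with $C_1=C_2=C$ and use injectivity of $C\mapsto T_C$) is complete and correct, as is (3)$\Rightarrow$(2) by the tower property.

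In (2)$\Rightarrow$(3), however, two steps are asserted with the wrong justification, and both repairs rest on the same pair of properties of $T_C$: positivity and integral preservation. First, you claim that the $\mathcal{G}$-measurability of $T_Cf$ \enquote{follows directly from idempotency}. Idempotency only places $T_Cf$ in the fixed space of $T_C$; that fixed functions are $\mathcal{G}$-measurable is precisely the nontrivial point. It does hold, because the fixed space of a doubly stochastic operator is a closed sublattice of $L^1$ containing the constants: if $T_Cf=f$, positivity gives $T_C\abs{f}\geq\abs{T_Cf}=\abs{f}$, and integral preservation upgrades this inequality to $T_C\abs{f}=\abs{f}$; consequently $\min\rbraces{n(f-c)^+,\indFunc{[0,1]}}$ is fixed for every $n$ and converges in $L^1$ to $\indFunc{\{f>c\}}$, and since $T_C$ is an $L^1$-contraction its fixed space is closed, so every level set of a fixed function lies in $\mathcal{G}$. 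The same two properties also give the closure of $\mathcal{G}$ under intersections that your $\sigma$-algebra step needs: $T_C\indFunc{A\cap B}\leq\min\rbraces{\indFunc{A},\indFunc{B}}=\indFunc{A\cap B}$ by positivity, and equality of integrals forces equality a.e. Second, the averaging identity cannot be obtained from \enquote{self-adjointness}: $T_C$ is self-adjoint only when $C$ is symmetric, and in general the adjoint of $T_C$ is $T_{C^\top}$. What you need is $T_{C^\top}\indFunc{A}=\indFunc{A}$ for every $A\in\mathcal{G}$, and this again follows from double stochasticity: $\int_0^1 (T_{C^\top}\indFunc{A})\indFunc{A}\,\mathrm{d}\lambda=\int_0^1 \indFunc{A}\,(T_C\indFunc{A})\,\mathrm{d}\lambda=\lambda(A)$ together with $0\leq T_{C^\top}\indFunc{A}\leq 1$ forces $T_{C^\top}\indFunc{A}=1$ a.e.\ on $A$, and preservation of its integral then forces it to vanish a.e.\ off $A$; hence $\int_A T_Cf\,\mathrm{d}\lambda=\int_0^1 (T_{C^\top}\indFunc{A})\,f\,\mathrm{d}\lambda=\int_A f\,\mathrm{d}\lambda$. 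With these two repairs your plan closes; the multiplicativity of $T_C$ on its fixed space, which you single out as the technical heart, is never actually needed.
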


Lastly, we will review the concept of ordinal sums, see, for example, \cite{Nelsen.2006}.

\begin{definition}
Let $\rbraces{(a_k, b_k)}_{k \in \mathcal{I}}$ be a countable family of disjoint intervals in $(0, 1)$ and $(C_k)_{k \in \mathcal{I}}$ a family of $2$-copulas.
A $2$-copula $C$ is called an ordinal sum of $(C_k)_{k \in \mathcal{I}}$ with respect to $\rbraces{(a_k, b_k)}_{k \in \mathcal{I}}$ if
\begin{equation*}
	C(u, v)	= 	\begin{cases}
					a_k + (b_k - a_k) C_k \rbraces{\frac{u - a_k}{b_k - a_k}, \frac{v - a_k}{b_k - a_k}}	&\text{ if } (u, v) \in (a_k, b_k)^2 \\
					C^+(u, v)																				&\text{ else}
				\end{cases} ~.
\end{equation*}
We use the short-hand notation $C = \rbraces{\ordSum{(a_k, b_k)}{C_k}}_{k \in \mathcal{I}}$ presented in \cite{Durante.2015}. 
\end{definition}
\section{Stochastic monotonicity for copulas and Markov operators} \label{section:characterization}

\begin{definition}
A $2$-copula $C$ is called stochastically increasing (decreasing) in the $i$-th component if $u_i \mapsto \partial_i C(u_1, u_2)$ is decreasing (increasing) for almost all $u_i \in [0, 1]$. 
\end{definition}
Whenever the meaning is clear, we will drop the specification \enquote{in the $i$-th component}.
If the distinction between $u \mapsto \partial_i C(u, v)$ being increasing or decreasing for all $v \in [0, 1]$ is of no concern, we will call $C$ simply stochastically monotone.
Furthermore, we will state many results only with respect to the first component, from which the result in the other component follows by transposition.   
We denote the set of all in the first component stochastically increasing (decreasing) copulas  by $\ir{\C}$ ($\dr{\C}$).
Note that stochastically \emph{increasing} $2$-copulas have a \emph{decreasing} partial derivative.
This is due to the fact that for real random variables $X$ and $Y$, $X \leq_{st} Y$ if and only if $$F_X(t) \geq F_Y(t)$$ holds for all $t \in \R$. 

We will begin by presenting some well-known examples of stochastically monotone $2$-copulas. 

\begin{example}
An Archimedean $2$-copula
\begin{equation*}
	C (u, v) = \phi (\phi^{[-1]}(u) + \phi^{[-1]}(v))
\end{equation*}
with twice-differentiable additive generator $\phi$ is stochastically increasing in both components if and only if $\log \rbraces{- \phi'}$ is convex (see Proposition~3.3 in \cite{Capra.1993}). 
The independence copula $\Pi$ with generator $\phi (t) = \exp(-t)$ is stochastically increasing. 
\end{example}

\begin{example}
The class of extreme value copulas 
\begin{equation*}
	C (u, v) = \exp \rbraces{ \log (uv) A \rbraces{\frac{\log u}{\log uv}}} ~,
\end{equation*}
where $A: [0, 1] \rightarrow [0, 1]$ is a convex function fulfilling $\max \rbraces{t, 1-t} \leq A(t) \leq 1$ for $t \in [0, 1]$, is stochastically increasing in both components (see, Theorem~1 in \cite{Garralda.2000}). 
This class includes both $\Pi$ and $C^+$ as examples with $A(t) = 1$ and $A(t) = \max \rbraces{t, 1-t}$, respectively.
\end{example}

All of these examples are stochastically increasing in both components, thus not allowing for an only unidirectional positive influence between the random variables. 
Many common construction methods using $2$-copulas as building blocks also preserve the stochastic increasing property, such as convex combinations and ordinal sums of stochastically increasing $2$-copulas (see \cite{Durante.2009}).
A $2$-copula which is stochastically increasing in the first component, but not in the second is given by the following example. 

\begin{example}
A straight-forward calculation shows that the checkerboard-copula 
\begin{equation*}
	C^\#_3 (A)(u, v)	= \sum\limits_{k, \ell = 1}^3 a_{k\ell} \cInt{\indFunc{\cbraces{\frac{k-1}{n}, \frac{k}{n}}}(s)}{0}{u}{s} \cInt{\indFunc{\cbraces{\frac{\ell-1}{n}, \frac{\ell}{n}}}(t)}{0}{v}{t} 	~,	
\end{equation*}
with the doubly stochastic matrix $A = (a_{k\ell})_{k, \ell = 1, 2, 3} \in \R^{3 \times 3}$ 
\begin{equation*}
	A = \begin{pmatrix}
			2/3	&0		&1/3 \\
			1/3	&1/3	&1/3 \\
			0	&2/3	&1/3
		\end{pmatrix} 
\end{equation*}
is stochastically increasing in the first but not the second component. 
A plot of the partial derivatives of $C^\#_3(A)$ is depicted in Figure~\ref{fig:checkerboard_SI_NSI}.
\begin{figure}
\centering
\begin{subfigure}[b]{0.49\textwidth}
	\centering
    \resizebox{\linewidth}{!}{\begin{tikzpicture}[declare function={
    pCB1(\u)=and(\u >= 0, \u < 0.33)*(0.67) + and(\u >= 0.33, \u < 0.67)*(0.33) + and(\u >= 0.67, \u <= 1)*(0);
    pCB2(\u)=and(\u >= 0, \u <=1)*(0.67);
    }]
	\begin{axis}[xmin=0, xmax=1, ymin=0, ymax=1, xtick={0, 0.33, 0.67, 1}, ytick={0, 0.33, 0.67, 1}]
    	\foreach \xStart/\xEnd  in {0/0.329, 0.33/0.67, 0.67/1} {
        	\addplot[domain=\xStart:\xEnd, dashdotted] {pCB1(x)};
    	}
    	\addplot[fill,only marks,mark=*] coordinates{(0.33,0.67)};
    	\addplot[fill=white,only marks,mark=*] coordinates{(0.33,0.33)};
    	\draw[dotted] (axis cs:0.33,0.67) -- (axis cs:0.33,0.33);
    	\addplot[fill,only marks,mark=*] coordinates{(0.67,0.33)};
    	\addplot[fill=white,only marks,mark=*] coordinates{(0.67,0)};
    	\draw[dotted] (axis cs:0.67,0.33) -- (axis cs:0.67,0);
	\end{axis}
\end{tikzpicture}}
    \caption{Plot of $\partial_1 C^\#_3 (A)(\cdot, 1/3)$.}
    \label{figSubA:checkerboard_SI_NSIFirst}
\end{subfigure}
\begin{subfigure}[b]{0.49\textwidth}
	\centering
    \resizebox{\linewidth}{!}{\begin{tikzpicture}[declare function={
    pCB1(\u)=and(\u >= 0, \u < 0.33)*(0.67) + and(\u >= 0.33, \u < 0.67)*(0) + and(\u >= 0.67, \u <= 1)*(0.33);
    pCB2(\u)=and(\u >= 0, \u < 0.33)*(1) + and(\u >= 0.33, \u < 0.67)*(0.33) + and(\u >= 0.67, \u <= 1)*(0.67);
    }]
	\begin{axis}[xmin=0, xmax=1, ymin=0, ymax=1, xtick={0, 0.33, 0.67, 1}, ytick={0, 0.33, 0.67, 1}]
    	\foreach \xStart/\xEnd  in {0/0.329, 0.33/0.67, 0.67/1} {
        	\addplot[domain=\xStart:\xEnd, dashdotted] {pCB1(x)};
    	}
    	\addplot[fill,only marks,mark=*] coordinates{(0.33,0.67)};
    	\addplot[fill=white,only marks,mark=*] coordinates{(0.33,0)};
    	\draw[dotted] (axis cs:0.33,0.67) -- (axis cs:0.33,0);
    	\addplot[fill,only marks,mark=*] coordinates{(0.67,0)};
    	\addplot[fill=white,only marks,mark=*] coordinates{(0.67,0.33)};
    	\draw[dotted] (axis cs:0.67,0.33) -- (axis cs:0.67,0);
%
	\end{axis}
\end{tikzpicture}}
    \caption{Plot of $\partial_2 C^\#_3 (A)(1/3, \cdot)$.}
    \label{figSubB:checkerboard_SI_NSISecond}
\end{subfigure}
\caption{Plots of the partial derivative of the checkerboard copula $\CB{A}$ with respect to the first and second component.}
\label{fig:checkerboard_SI_NSI}
\end{figure}
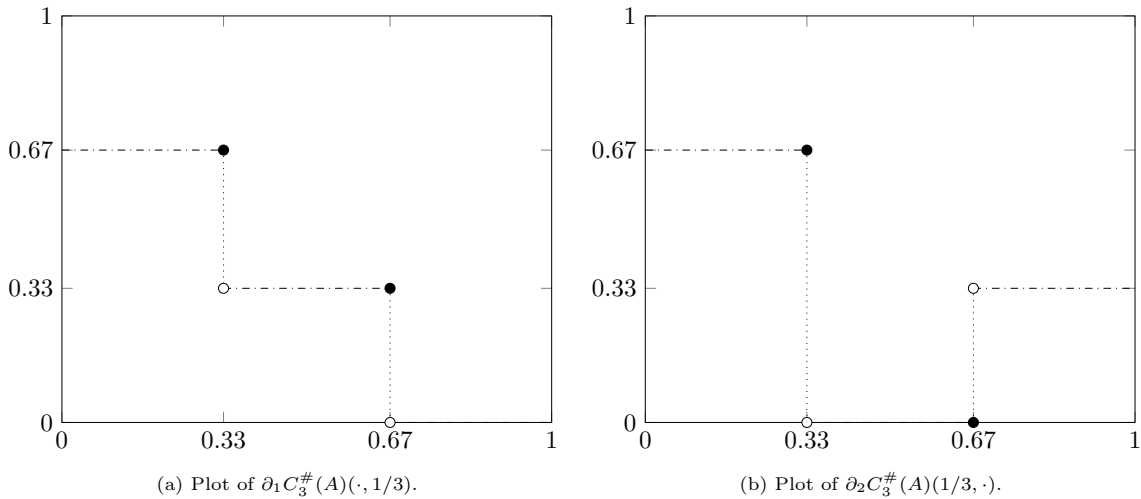
\end{example}

To simplify subsequent proofs, we will first present a direct connection between stochastically increasing and decreasing copulas, allowing us to transfer results obtained for stochastically increasing copulas to stochastically decreasing ones and vice versa.

\begin{lemma} \label{lemma:si_involution_sd}
The mapping $C \mapsto (C^- * C)$ is an involution between $\ir{\C}$ and $\dr{\C}$. 
\end{lemma}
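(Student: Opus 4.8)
The plan is to reduce the whole statement to one explicit identity describing how left-multiplication by $C^-$ in the Markov product acts on the first partial derivative. Since $\partial_2 C^-(u, t) = \indFunc{t > 1-u}$ for almost all $(u,t)$, the defining integral of the Markov product collapses, and I would first show
\[
	(C^- * C)(u, v) = \cInt{\partial_1 C(t, v)}{1-u}{1}{t} = \int_0^u \partial_1 C(1-s, v)\,\mathrm{d}s,
\]
where the second equality is the substitution $t = 1-s$. Differentiating in $u$ by the Lebesgue form of the fundamental theorem of calculus then yields the key identity
\[
	\partial_1 (C^- * C)(u, v) = \partial_1 C(1-u, v) \qquad \text{for a.e. } u \in [0,1] \text{ and all } v \in [0,1].
\]

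With this identity the transfer between $\ir{\C}$ and $\dr{\C}$ is immediate, since the reflection $u \mapsto 1-u$ is strictly decreasing and therefore interchanges \emph{decreasing} and \emph{increasing} when precomposed with $\partial_1 C(\cdot, v)$. Concretely, if $C \in \ir{\C}$, so that $u \mapsto \partial_1 C(u, v)$ is decreasing for every $v$, then $u \mapsto \partial_1(C^- * C)(u, v) = \partial_1 C(1-u, v)$ is increasing, i.e.\ $C^- * C \in \dr{\C}$; running the same argument in reverse gives the inclusion $\dr{\C} \to \ir{\C}$. Hence $C \mapsto C^- * C$ maps $\ir{\C}$ into $\dr{\C}$ and vice versa (recall that $C^- * C$ is automatically a copula).

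It remains to verify that the map is its own inverse, i.e.\ $C^- * (C^- * C) = C$ for every copula $C$. For this I would pass to Markov operators through Theorem~\ref{thm:cop_mo_correspondence}: the operator attached to $C^-$ is the reflection $T_{C^-} f(u) = f(1-u)$, which squares to the identity, while the identity operator corresponds to the neutral element $C^+$ of the Markov product. Thus $C^- * C^- = C^+$, and using associativity of $*$ (inherited from composition of operators) together with $C^+ * C = C$ gives $C^- * (C^- * C) = (C^- * C^-) * C = C^+ * C = C$. Combined with the two inclusions above, this exhibits $C \mapsto C^- * C$ as a bijection $\ir{\C} \leftrightarrow \dr{\C}$ whose inverse is given by the same map, i.e.\ the asserted involution.

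The main obstacle is the very first step: the partial derivatives are only defined almost everywhere, so differentiating $\int_{1-u}^{1} \partial_1 C(t, v)\,\mathrm{d}t$ in $u$ must be justified via the Lebesgue differentiation theorem — using that $\partial_1 C(\cdot, v)$ is bounded (by $1$-Lipschitz continuity of $C$), hence integrable — rather than by naive differentiation under the integral sign. Once this identity is secured, both the monotonicity flip and the algebraic involution property are short.
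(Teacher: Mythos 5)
Your proof is correct and follows essentially the same route as the paper, whose entire argument is the key identity $\partial_1 (C^- * C)(u, v) = \partial_1 C(1-u, v)$ that you derive explicitly; you simply spell out the details the paper leaves implicit (the collapse of the Markov product integral, the monotonicity flip under $u \mapsto 1-u$, and the self-inverse property via $C^- * C^- = C^+$).
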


\begin{proof}
The claim follows immediately from $\partial_1 (C^- * C)(u, v) = \partial_1 C(1-u, v)$ for all $u, v \in [0, 1]$ and any $2$-copula $C$.  
\end{proof}

The class of stochastically monotone copulas also provides additional structure to strengthen convergence properties since the monotonicity yields the equivalence of uniform convergence convergence, the pointwise convergence of the partial derivative and the weak conditional convergence introduced
in \cite{Kasper.2020}. 
The extension to weakly conditional convergent copulas was communicated to us by Wolfgang Trutschnig. 

\begin{proposition} \label{proposition:stochastic_monotone_convergence}
Let $C_n$ be a sequence of $2$-copulas, which are stochastically monotone in the first component. 
Then the following are equivalent: 
\begin{enumerate}
	\item $C_n$ converges uniformly towards $C$.\label{thm:convergence_pntw}
	\item $\partial_1 C_n(u, v)$ converges pointwise towards $\partial_1 C(u, v)$ for all $v$ in $[0, 1]$ and almost all $u$ in $[0, 1]$. \label{thm:convergence_derivative}
	\item $C_n$ converges weakly conditional towards $C$, i.e. the associated Markov kernels\footnote{see, e.g., \cite{Durante.2015} for a definition and a comprehensive overview on Markov kernels.} $K_{C_n}(u, \cdot)$ converge weakly towards $K_C(u, \cdot)$ for almost all $u \in [0, 1]$.  \label{thm:convergence_wcc}
\end{enumerate} 
\end{proposition}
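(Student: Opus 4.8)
The plan is to prove the cycle $\ref{thm:convergence_pntw}\Rightarrow\ref{thm:convergence_derivative}\Rightarrow\ref{thm:convergence_wcc}\Rightarrow\ref{thm:convergence_pntw}$; the stochastic monotonicity hypothesis is only genuinely needed in the first implication. Throughout I assume all $C_n$ are stochastically increasing in the first component, so that $u\mapsto C_n(u,v)$ is concave and $g_n:=\partial_1 C_n(\cdot,v)$ is decreasing for every fixed $v$; the decreasing case follows symmetrically, e.g.\ via the involution of Lemma~\ref{lemma:si_involution_sd}.

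For $\ref{thm:convergence_pntw}\Rightarrow\ref{thm:convergence_derivative}$ I would exploit the monotonicity of $g_n$ through Helly's selection theorem. Fix $v$. Every subsequence of the $[0,1]$-valued decreasing functions $g_n$ admits a further subsequence converging almost everywhere to a decreasing limit $g$. Since $C_n(u,v)=\int_0^u g_n(s)\,\mathrm{d}s\to C(u,v)$ while bounded convergence gives $\int_0^u g_n\to\int_0^u g$ along the subsequence, the limit satisfies $\int_0^u g(s)\,\mathrm{d}s=C(u,v)$ for every $u$. Because $C$ is again stochastically increasing (a uniform limit of functions concave in the first argument stays concave), one also has $C(u,v)=\int_0^u \partial_1 C(s,v)\,\mathrm{d}s$, whence $g=\partial_1 C(\cdot,v)$ almost everywhere. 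As every subsequential limit is the same, the full sequence converges, i.e.\ $\partial_1 C_n(u,v)\to\partial_1 C(u,v)$ for almost all $u$; since $v$ was arbitrary this is exactly \ref{thm:convergence_derivative}.

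For $\ref{thm:convergence_derivative}\Rightarrow\ref{thm:convergence_wcc}$ I use that $v\mapsto\partial_1 C_n(u,v)=K_{C_n}(u,[0,v])$ is the distribution function of the kernel $K_{C_n}(u,\cdot)$. Intersecting the countably many full-measure sets on which \ref{thm:convergence_derivative} holds for each point $q$ of a countable dense set $Q\subset[0,1]$ yields a full-measure set of $u$ along which $\partial_1 C_n(u,q)\to\partial_1 C(u,q)$ for all $q\in Q$ simultaneously; convergence of distribution functions on a dense set is equivalent to weak convergence $K_{C_n}(u,\cdot)\to K_C(u,\cdot)$, which is \ref{thm:convergence_wcc}. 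For $\ref{thm:convergence_wcc}\Rightarrow\ref{thm:convergence_pntw}$ the quickest route is to invoke that weak conditional convergence implies uniform convergence of copulas (\cite{Kasper.2020}); alternatively, note that for all but countably many $v$ the horizontal line carries no $C$-mass, so $v$ is a continuity point of $K_C(s,\cdot)$ for almost all $s$, weak convergence then forces $\partial_1 C_n(s,v)\to\partial_1 C(s,v)$ for almost all $s$, and dominated convergence in $C_n(u,v)=\int_0^u\partial_1 C_n(s,v)\,\mathrm{d}s$ gives pointwise convergence on a dense set, which the $1$-Lipschitz equicontinuity of copulas upgrades to uniform convergence.

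The main obstacle is the implication $\ref{thm:convergence_pntw}\Rightarrow\ref{thm:convergence_derivative}$: in general uniform convergence of copulas carries no information about the convergence of their partial derivatives, and it is precisely stochastic monotonicity that rescues the argument by making $g_n$ monotone (equivalently $C_n(\cdot,v)$ concave), so that Helly selection plus the integral identification pins down the limit. A secondary point requiring care is to first confirm that the limit $C$ inherits stochastic monotonicity, since otherwise $\partial_1 C$ need not be well defined almost everywhere and statement \ref{thm:convergence_derivative} could not even be formulated.
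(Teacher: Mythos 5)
Your proposal follows the same cyclic scheme as the paper, (\ref{thm:convergence_pntw})$\Rightarrow$(\ref{thm:convergence_derivative})$\Rightarrow$(\ref{thm:convergence_wcc})$\Rightarrow$(\ref{thm:convergence_pntw}), and two of the three legs essentially coincide with the paper's proof: your dense-set argument for (\ref{thm:convergence_derivative})$\Rightarrow$(\ref{thm:convergence_wcc}) is exactly the one in the paper, and for (\ref{thm:convergence_wcc})$\Rightarrow$(\ref{thm:convergence_pntw}) both you and the paper appeal to an external result (the paper uses Lemma~7 of \cite{Trutschnig.2011}); your self-contained alternative via continuity points of $K_C(s,\cdot)$ and dominated convergence is also sound, and can even be simplified, since the uniformity of the second margin gives $K_C(s,\{v\})=0$ for almost all $s$ for \emph{every} $v$, not just for all but countably many. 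The genuine difference lies in (\ref{thm:convergence_pntw})$\Rightarrow$(\ref{thm:convergence_derivative}): the paper quotes Lemma~1 of \cite{Tsuji.1952}, i.e.\ the classical fact that pointwise convergence of concave functions forces convergence of their derivatives wherever the limit is differentiable, whereas you use Helly selection together with identification of the limit through the integral representation $C_n(u,v)=\int_0^u \partial_1 C_n(s,v)\,\mathrm{d}s$. Your route is more self-contained, but it is also where the one real flaw of the proposal sits.

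The flaw is the final inference: from \enquote{every subsequence of $g_n=\partial_1 C_n(\cdot,v)$ has a further subsequence converging a.e.\ to the same limit} you conclude that the full sequence converges almost everywhere. This subsequence principle is \emph{invalid} for almost-everywhere convergence, which is not induced by any topology: the moving-indicator (\enquote{typewriter}) sequence converges in measure to $0$, so every subsequence has a further a.e.-convergent subsequence with limit $0$, yet the full sequence converges at no point. What your argument literally establishes is convergence in measure of $\partial_1 C_n(\cdot,v)$ to $\partial_1 C(\cdot,v)$. The monotonicity you already have repairs this in two lines, but the repair must be made pointwise: let $g:=\partial_1 C(\cdot,v)$ (a decreasing function) and fix a continuity point $u$ of $g$ at which the derivative exists; any Helly subsequential limit $\tilde g$ is decreasing and agrees with $g$ almost everywhere, and sandwiching $\tilde g(u)$ between values $g(u')$ and $g(u'')$ with $u'<u<u''$ taken from the (dense) agreement set forces $\tilde g(u)=g(u)$ by continuity of $g$ at $u$. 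Hence every subsequence of the bounded \emph{real} sequence $\bigl(g_n(u)\bigr)_n$ has a further subsequence converging to $g(u)$, and the subsequence principle for real numbers gives $g_n(u)\to g(u)$. Since a decreasing function has at most countably many discontinuities, this yields (\ref{thm:convergence_derivative}). Note that the restriction to continuity points is not cosmetic: at a jump of $g$, different subsequences of $g_n(u)$ may genuinely converge to different values in the jump interval, so your claim \enquote{every subsequential limit is the same} is false there.
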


\begin{proof}
Using Lemma~7 in \cite{Trutschnig.2011}, (\ref{thm:convergence_wcc}) implies (\ref{thm:convergence_pntw}).
Conversely, suppose $C_n$ converges uniformly towards $C$.
Due to $C_n(\cdot, v)$ and $C(\cdot, v)$ being concave (convex) for all $v \in [0, 1]$, Lemma~1 in \cite{Tsuji.1952} implies  
\begin{equation*}
	\lim\limits_{n \rightarrow \infty} \partial_1 C_n(u, v) = \partial_1 C(u, v)
\end{equation*}
for almost all $u \in [0, 1]$ and all $v \in [0, 1]$. 
This yields the assertion (\ref{thm:convergence_pntw}) to (\ref{thm:convergence_derivative}).
Lastly, suppose (\ref{thm:convergence_derivative}) holds, i.e. $\partial_1 C_n(u, v)$ converges pointwise towards $\partial_1 C(u, v)$ for all $v$ and almost all $u$.
Let $K_n$ and $K$ denote the Markov kernel associated with $C_n$ and $C$, respectively.
Then there exists for any $v \in [0, 1] \cap \mathbb{Q}$ a set $\Lambda_v$ such that $\lambda(\Lambda_v) = 1$ and $K_n(u, [0, v])	= \partial_1 C_n(u, v)$ holds for all $u \in \Lambda_v$. 
Thus,
\begin{equation*}
	K_n(u, [0, v])	\rightarrow K (u, [0, v])	\quad \text{ for all } u \in \Lambda := \bigcap_{v \in [0, 1] \cap \mathbb{Q}} \Lambda_v
\end{equation*}
and all $v \in [0, 1] \cap \mathbb{Q}$.
The pointwise convergence of the distribution functions on a dense set then yields the convergence in all continuity points.
Thus, $K_n(u, \cdot)$ converges weakly towards $K(u, \cdot)$ for almost all $u$ in $[0, 1]$ and the assertion follows.
\end{proof}

\begin{remark}
Proposition~\ref{proposition:stochastic_monotone_convergence} yields the equivalence of the uniform convergence and the $D_1$-convergence for stochastically monotone copulas, where the metric $D_1$ (see, \cite{Trutschnig.2011}) is defined by
\begin{equation*}
	D_1(C_1, C_2)	:= \lebesgue{\abs{\partial_1 C_1(u, v) - \partial_1 C_2(u, v)}}{[0, 1]^2}{\lambda(u, v)} ~.
\end{equation*}
Moreover, $\ir{\C}$ and $\dr{\C}$ are closed with respect to $d_\infty$ and $D_1$.
\end{remark}

\begin{remark}
The equivalence of (\ref{thm:convergence_pntw}) to (\ref{thm:convergence_derivative}) in Proposition~\ref{proposition:stochastic_monotone_convergence} also holds for completely dependent copulas and their corresponding Markov operators, so-called Markov embeddings, a proof of which can be found in Theorem~13.11 in \cite{Eisner.2015}.
\end{remark}

We will now give the main result of this section and  characterize the behaviour of stochastically monotone $2$-copulas and their corresponding Markov operators.
We say $f \in L^1([0,1])$ is monotone if there exists a monotone function $g: [0, 1] \rightarrow \R$ such that $f = g$ holds almost everywhere.

\begin{theorem}\label{thm:characterization}
Suppose $X$ and $Y$ are continuous random variables with copula $C$. 
Then the following are equivalent
\begin{enumerate}
	\item $Y$ is stochastically increasing (decreasing) in $X$. \label{thm:characterization_rv}
	\item $C$ is stochastically increasing (decreasing) in the first component.\label{thm:characterization_stoch}
	\item $C(u, v)$ is concave (convex) in $u$ for all $v \in [0, 1]$. \label{thm:characterization_con}
	\item $T_C$ is a monotonicity-preserving (monotonicity-reversing) Markov operator, i.e. $T$ maps decreasing integrable functions onto decreasing (increasing) functions.\label{thm:characterization_operator}
	\item $\mathbb{E} (f(Y) \mid X = x)$ is decreasing (increasing) for every decreasing function $f$ such that the expectation exists.\label{thm:characterization_expectation}
\end{enumerate}
\end{theorem}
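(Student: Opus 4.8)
The plan is to prove Theorem~\ref{thm:characterization} by establishing a chain of equivalences, treating only the stochastically increasing case since the decreasing case follows by the transposition remark or by applying Lemma~\ref{lemma:si_involution_sd}. The equivalences (\ref{thm:characterization_rv}) $\Leftrightarrow$ (\ref{thm:characterization_stoch}) $\Leftrightarrow$ (\ref{thm:characterization_con}) are essentially the classical facts recalled in the introduction, so I would dispatch them quickly. For (\ref{thm:characterization_rv}) $\Leftrightarrow$ (\ref{thm:characterization_stoch}), I would recall that for continuous random variables the conditional distribution function $\mathbb{P}(Y \leq y \mid X = x)$ equals $\partial_1 C(F_X(x), F_Y(y))$ almost everywhere, so the stochastic dominance ordering $[Y \mid X = x_1] \preceq_{st} [Y \mid X = x_2]$ for $x_1 \leq x_2$ translates directly into $u \mapsto \partial_1 C(u, v)$ being decreasing. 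The equivalence (\ref{thm:characterization_stoch}) $\Leftrightarrow$ (\ref{thm:characterization_con}) is the standard characterization of concavity via a decreasing derivative: since $u \mapsto C(u,v)$ is Lipschitz, it is concave if and only if its derivative $\partial_1 C(\cdot, v)$ is decreasing almost everywhere.

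The substantive step is the equivalence with (\ref{thm:characterization_operator}), linking the copula condition to the monotonicity-preserving property of the Markov operator $T_C$. Here I would use the explicit formula from Theorem~\ref{thm:cop_mo_correspondence}, namely $T_C f(u) = \partial_u \int_0^1 \partial_v C(u,v) f(v)\,\mathrm{d}v$. The cleanest route is to first verify the claim on indicator functions of initial segments: for $f = \indFunc{[0,v]}$ one computes $T_C \indFunc{[0,v]}(u) = \partial_1 C(u, v)$, so the monotonicity-preserving property on such indicators is literally the statement that $u \mapsto \partial_1 C(u,v)$ is decreasing, i.e.\ condition (\ref{thm:characterization_stoch}). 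The work is then to bootstrap from these indicators to all decreasing $f$. For the direction (\ref{thm:characterization_stoch}) $\Rightarrow$ (\ref{thm:characterization_operator}), I would approximate a decreasing function $g$ from below by nonnegative combinations $g \approx c + \sum_k c_k \indFunc{[0, v_k]}$ with $c_k \geq 0$ (possible precisely because $g$ is decreasing, writing it as a constant plus a positive combination of decreasing indicators via its negative distributional derivative), apply linearity of $T_C$ together with the fixed-point property $T_C \indFunc{[0,1]} = \indFunc{[0,1]}$, observe that each summand $T_C \indFunc{[0,v_k]} = \partial_1 C(\cdot, v_k)$ is decreasing, and conclude that $T_C g$ is a nonnegative combination of decreasing functions hence decreasing; a limiting argument in $L^1$ extends this to general decreasing $f$. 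The reverse direction (\ref{thm:characterization_operator}) $\Rightarrow$ (\ref{thm:characterization_stoch}) is immediate by specializing to $f = \indFunc{[0,v]}$.

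Finally, for the equivalence with the conditional-expectation statement (\ref{thm:characterization_expectation}), I would invoke the interpretation of $T_C$ as the operator encoding the conditional distribution of $Y$ given $X$: under the copula transform one has $\mathbb{E}(f(Y) \mid X = x) = (T_C \tilde f)(F_X(x))$ up to the monotone reparametrization by the marginal distribution functions, where $\tilde f$ is $f$ composed with $F_Y^{-1}$ and hence decreasing whenever $f$ is. Monotone reparametrization preserves monotonicity, so (\ref{thm:characterization_expectation}) is equivalent to $T_C$ mapping decreasing functions to decreasing functions, which is (\ref{thm:characterization_operator}). I expect the main obstacle to be the careful bootstrapping in (\ref{thm:characterization_stoch}) $\Rightarrow$ (\ref{thm:characterization_operator}): one must justify the representation of an arbitrary decreasing $L^1$ function as a limit of positive combinations of indicators $\indFunc{[0, v_k]}$ and then pass the monotonicity through the limit, using that the $L^1$-limit of a sequence of decreasing functions is again (a.e.\ equal to) a decreasing function. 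The measure-theoretic subtleties here — the almost-everywhere definition of the partial derivatives and the interchange of $T_C$ with limits — are where the genuine care is needed, whereas the remaining equivalences are largely bookkeeping against the classical results.
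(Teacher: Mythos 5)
Your proposal is correct and takes essentially the same route as the paper: the equivalences (\ref{thm:characterization_rv})--(\ref{thm:characterization_con}) are dispatched as classical facts, (\ref{thm:characterization_operator}) $\Rightarrow$ (\ref{thm:characterization_stoch}) follows by specializing to $f = \indFunc{[0,v]}$, and (\ref{thm:characterization_stoch}) $\Rightarrow$ (\ref{thm:characterization_operator}) uses exactly the bootstrapping you describe --- approximating decreasing functions by positive combinations of the indicators $\indFunc{[0,v]}$ and passing to the limit, which is what the paper delegates to \cite{Mosler.1991} and the monotone convergence theorem. The only cosmetic difference is that you route (\ref{thm:characterization_expectation}) through (\ref{thm:characterization_operator}) via the marginal transforms, while the paper ties (\ref{thm:characterization_expectation}) to (\ref{thm:characterization_rv}) directly through $\mathbb{E}(\indFunc{[0,y]}(Y) \mid X = x) = \mathbb{P}(Y \leq y \mid X = x)$; both rest on the same indicator-approximation idea.
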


\begin{proof}[Proof of Theorem \ref{thm:characterization}]
We give the proof for stochastically increasing random variables; the case of stochastically decreasing random variables is similar and left to the reader.
The equivalence of (\ref{thm:characterization_rv}), (\ref{thm:characterization_stoch}) and (\ref{thm:characterization_con}) is shown in \cite{Nelsen.2006}. 
Suppose (\ref{thm:characterization_operator}) holds, then $f = \indFunc{[0, v]}$ yields (\ref{thm:characterization_stoch}).
For the implication (\ref{thm:characterization_stoch}) to (\ref{thm:characterization_operator}), note that $T_C$ maps decreasing indicator functions onto decreasing functions due to
\begin{equation*}
	T_C \indFunc{[0, v]}(\cdot) = \partial_1 C(\cdot, v) 
\end{equation*}
being decreasing for all $v \in [0, 1]$. 
Using the approximation of monotone functions via monotone indicator functions and applying the monotone convergence theorem, (\ref{thm:characterization_operator}) follows as outlined in \cite{Mosler.1991}. 
Similarly, (\ref{thm:characterization_rv}) and (\ref{thm:characterization_expectation}) are equivalent using
\begin{equation*}
	\mathbb{E} (\indFunc{[0, y]}(Y) \mid X = x) = \mathbb{P} (Y \leq y \mid X = x) ~. \qedhere
\end{equation*}
\end{proof}
\section{The Markov product of stochastically monotone copulas} \label{section:markov_product}

The characterization given in Theorem~\ref{thm:characterization} guarantees that the composition of monotonicity-preserving Markov operators is again monotonicity-preserving.
Using the isomorphism between $(\Cd{2}, *)$ and Markov operators equipped with the composition, we establish the following closure property of $\ir{C}$ and $\dr{C}$ with respect to the Markov product.

\begin{corollary} \label{cor:stochastic_decreasing_markov}
Suppose $C_1, C_2 \in \Cd{2}$ are stochastically monotone in the first component. 
Then $C_1 * C_2$ is again stochastically monotone in the first component.
More precisely, $C_1 * C_2$ is 
\begin{enumerate}
	\item stochastically increasing if both $C_1$ and $C_2$ are either stochastically increasing or stochastically decreasing. \label{cor:stochastic_decreasing_markov_incr}
	\item stochastically decreasing if one is stochastically increasing and one is stochastically decreasing. \label{cor:stochastic_decreasing_markov_decr}
\end{enumerate}
\end{corollary}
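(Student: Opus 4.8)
The plan is to reduce everything to the composition formula $T_{C_1 * C_2} = T_{C_1} \circ T_{C_2}$ from Theorem~\ref{thm:cop_mo_correspondence} and then track how monotonicity propagates under composition, using the operator characterization in Theorem~\ref{thm:characterization}. The guiding principle is that the stochastic increasing/decreasing attribute should multiply like a sign: increasing composed with increasing, or decreasing with decreasing, yields increasing, while a mixed pair yields decreasing.

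First I would record the small observation that is the real engine of the argument: if a Markov operator $T$ is monotonicity-preserving, then by linearity it also sends increasing functions to increasing functions, since for increasing $f$ the function $-f$ is decreasing, whence $T(-f) = -Tf$ is decreasing and so $Tf$ is increasing. Symmetrically, a monotonicity-reversing operator sends increasing functions to decreasing ones. Thus each of $T_{C_1}$ and $T_{C_2}$ has a well-defined effect on the two monotonicity classes, either preserving or flipping the type.

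Next I would translate the hypotheses through Theorem~\ref{thm:characterization}: membership $C_i \in \ir{\C}$ means $T_{C_i}$ is monotonicity-preserving, and $C_i \in \dr{\C}$ means $T_{C_i}$ is monotonicity-reversing. Now fix a decreasing $f \in L^1([0,1])$ and chase it through $T_{C_1 * C_2} = T_{C_1} \circ T_{C_2}$. If both copulas lie in the same class, the two effects either both preserve or both flip, so the composition sends $f$ to a decreasing function; hence $T_{C_1 * C_2}$ is monotonicity-preserving and $C_1 * C_2 \in \ir{\C}$, which is case~(\ref{cor:stochastic_decreasing_markov_incr}). If the classes differ, exactly one flip occurs, so the composition sends $f$ to an increasing function, giving $C_1 * C_2 \in \dr{\C}$, which is case~(\ref{cor:stochastic_decreasing_markov_decr}). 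A final application of the equivalence of (\ref{thm:characterization_stoch}) and (\ref{thm:characterization_operator}) in Theorem~\ref{thm:characterization} converts the operator statement back into the asserted stochastic monotonicity of $C_1 * C_2$.

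I do not expect a genuine obstacle, since the content lies entirely in the composition formula together with the sign bookkeeping above. The only point demanding a moment's care is that the definition of monotonicity-preserving/-reversing is phrased only for decreasing inputs, so the extension to increasing inputs via linearity must be stated explicitly before the four-way case check; once that extension is in place, the four cases follow immediately.
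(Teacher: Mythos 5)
Your proof is correct and takes essentially the same route as the paper: translate the hypotheses into Markov operators via Theorem~\ref{thm:characterization}, invoke the composition formula $T_{C_1 * C_2} = T_{C_1} \circ T_{C_2}$, and carry out the four-way sign bookkeeping before converting back. The only difference is that you spell out the linearity argument extending the preserving/reversing property from decreasing to increasing inputs (needed because the definition is phrased only for decreasing functions), a detail the paper subsumes under its brief ``case-by-case analysis.''
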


\begin{proof}
If $C_1$ and $C_2$ are stochastically monotone in the first component, using Theorem~\ref{thm:characterization}, $T_{C_1}$ and $T_{C_2}$ map monotone functions onto monotone functions.
Their composition therefore also maps monotone functions onto monotone functions. 
Assertions~(\ref{cor:stochastic_decreasing_markov_incr}) and (\ref{cor:stochastic_decreasing_markov_decr}) follow immediately from a case-by-case analysis using Property~(\ref{thm:characterization_operator}) of Theorem~\ref{thm:characterization}.
\end{proof}

This closure property is only one aspect of the interplay between stochastically monotone $2$-copulas and the Markov product. 
The next result shows that stochastically increasing $2$-copulas maximize the Markov product.
A similar property was observed for tail dependence functions in Theorem~4.1 of \cite{Siburg.2021}.

\begin{theorem} \label{thm:characterization_mp_si}
Let $C$ be a $2$-copula.
$C$ is stochastically increasing in the first component if and only if
\begin{equation*} 
	(D * C)(u, v) \leq C(u, v) ~.
\end{equation*}
holds for all $2$-copulas $D$ and all $u, v  \in [0, 1]$.
On the other hand, $C$ is stochastically decreasing in the first component if and only if
\begin{equation*}
	C(u, v) \leq (D * C)(u, v) 
\end{equation*}
holds for all $2$-copulas $D$ and all $u, v  \in [0, 1]$. 
\end{theorem}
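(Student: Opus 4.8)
The plan is to reduce the pointwise copula inequality to a one-dimensional rearrangement inequality for the function $g := \partial_1 C(\cdot, v)$, and then to recognise that stochastic increasingness is exactly the monotonicity of $g$. Fix $v \in [0,1]$ and set $g(t) := \partial_1 C(t, v)$; then $0 \le g \le 1$, $\cInt{g(t)}{0}{1}{t} = v$, and $C(u, v) = \cInt{g(t)}{0}{u}{t}$. By the definition of the Markov product, for any copula $D$ one has $(D * C)(u, v) = \cInt{\partial_2 D(u, t)\, g(t)}{0}{1}{t}$. Writing $h(t) := \partial_2 D(u, t)$, the function $h$ satisfies $0 \le h \le 1$ and $\cInt{h(t)}{0}{1}{t} = D(u,1) - D(u,0) = u$. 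Thus the claimed inequality $(D*C)(u,v) \le C(u,v)$ becomes $\cInt{h(t) g(t)}{0}{1}{t} \le \cInt{g(t)}{0}{u}{t}$, and everything hinges on comparing these two integrals under the single constraint that $h$ takes values in $[0,1]$ and integrates to $u$.

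For the direction \enquote{$C$ stochastically increasing $\Rightarrow$ inequality}, I would use that $g$ is then decreasing and prove the elementary bathtub estimate: with $\phi := \indFunc{[0,u]} - h$ one has $\cInt{\phi(t)}{0}{1}{t} = 0$, while $\phi \ge 0$ on $[0,u]$ and $\phi \le 0$ on $[u,1]$. Choosing a threshold level $m$ with $g \ge m$ a.e.\ on $[0,u]$ and $g \le m$ a.e.\ on $[u,1]$ (available since $g$ is decreasing), the integrand $\phi(t)\,(g(t) - m)$ is nonnegative everywhere, so $\cInt{\phi(t) g(t)}{0}{1}{t} \ge m \cInt{\phi(t)}{0}{1}{t} = 0$. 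This is precisely $\cInt{g(t)}{0}{u}{t} \ge \cInt{h(t) g(t)}{0}{1}{t}$, i.e.\ $(D*C)(u,v) \le C(u,v)$, and it holds for every $D$.

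For the converse I would argue by contraposition. If $C$ is not stochastically increasing, then for some $v$ the function $g = \partial_1 C(\cdot, v)$ is not decreasing, so $G(u) := C(u,v)$ fails to be concave. Since the decreasing rearrangement satisfies $\cInt{g^*(t)}{0}{u}{t} = \sup \set{\lebesgue{g}{A}{\lambda}}{\lambda(A) = u}$, which is concave in $u$ and dominates $G$ with the same endpoint values, non-concavity of $G$ yields a level $u_0$ with $\cInt{g(t)}{0}{u_0}{t} < \cInt{g^*(t)}{0}{u_0}{t}$. Choosing a super-level set $A = \curly{g \ge c}$, adjusted on $\curly{g = c}$ so that $\lambda(A) = u_0$ exactly, realises $\lebesgue{g}{A}{\lambda} = \cInt{g^*(t)}{0}{u_0}{t}$. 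The remaining task is to produce a copula $D$ with $\partial_2 D(u_0, \cdot) = \indFunc{A}$: I would take a measure-preserving map $\sigma$ of $[0,1]$ with $\sigma(A) = [0, u_0]$ and $\sigma(A^c) = (u_0, 1]$ (which exists because $\lambda(A) = u_0$) and let $D$ be the complete-dependence copula of the coupling $U = \sigma(V)$. A direct computation gives $D(x,y) = \lambda(\set{t \le y}{\sigma(t) \le x})$, hence $\partial_2 D(u_0, t) = \indFunc{\curly{\sigma \le u_0}}(t) = \indFunc{A}(t)$. Then $(D * C)(u_0, v) = \lebesgue{g}{A}{\lambda} > \cInt{g(t)}{0}{u_0}{t} = C(u_0, v)$, contradicting the hypothesis; so $C$ must be stochastically increasing.

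Finally, the stochastically decreasing statement follows by the same scheme with all monotonicity and inequality signs reversed: when $g$ is increasing the bathtub estimate gives $\cInt{h(t) g(t)}{0}{1}{t} \ge \cInt{g(t)}{0}{u}{t}$, and conversely a sub-level set together with the corresponding deterministic copula detects any failure of $g$ to be increasing; alternatively one may deduce it from the increasing case through the involution $C \mapsto C^- * C$ of Lemma~\ref{lemma:si_involution_sd}. I expect the genuinely delicate step to be the converse direction — specifically, constructing the copula $D$ that realises the extremal rearrangement $\indFunc{A}$ as the conditional distribution $\partial_2 D(u_0, \cdot)$, together with the careful handling of atoms of the distribution of $g$ so that a super-level set of measure exactly $u_0$ is available.
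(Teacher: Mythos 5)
Your proposal is correct, and it is worth comparing with the paper's proof because, although both arguments have the same rearrangement-theoretic core (Hardy--Littlewood-type inequalities applied to $g := \partial_1 C(\cdot,v)$), the execution differs in both halves. For the forward implication the paper simply invokes Hardy's Lemma (Prop.~3.6 in Bennett--Sharpley), applied with $\int_0^t \partial_2 D(u,s)\,\mathrm{d}s = D(u,t) \leq C^+(u,t) = \int_0^t \indFunc{[0,u]}(s)\,\mathrm{d}s$; your bathtub estimate with the level $m$ is in effect an inline, self-contained proof of exactly the special case of that lemma that is needed, so here the difference is one of packaging rather than of idea. The genuine divergence is in the converse. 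The paper argues \emph{directly}: by Ryff's theorem it factors $\partial_1 C(\cdot,v) = g \circ \sigma$ with $g$ decreasing and $\sigma$ measure-preserving, forms $D := C_\sigma^\top * C$ via the adjoint of the Koopman operator $T_\sigma$ (so that $\partial_1 D(\cdot,v)$ \emph{is} the decreasing rearrangement of $\partial_1 C(\cdot,v)$), and then sandwiches: $D(\cdot,v) \leq C(\cdot,v)$ by the hypothesis and $C(\cdot,v) \leq D(\cdot,v)$ by the Hardy--Littlewood inequality, forcing equality and hence monotonicity of $\partial_1 C(\cdot,v)$. You instead argue by \emph{contraposition} with an explicit witness: a completely dependent copula $D$ whose kernel at $u_0$ is the indicator of a maximizing super-level set $A$ of $g$, giving the strict violation $(D*C)(u_0,v) = \lebesgue{g}{A}{\lambda} > \cInt{g(t)}{0}{u_0}{t} = C(u_0,v)$. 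Your route avoids Ryff's factorization theorem and the adjoint/Markov-operator formalism entirely, at the cost of the (routine but necessary) bookkeeping you correctly flag and handle: the strict gap $\cInt{g(t)}{0}{u_0}{t} < \cInt{g^*(t)}{0}{u_0}{t}$ extracted from non-concavity, the adjustment on $\curly{g = c}$ to get a super-level set of measure exactly $u_0$, and the explicit measure-preserving collapse of $A$ onto $[0,u_0]$ defining $D$. The paper's version, in exchange, delivers slightly more structural information (it exhibits $\partial_1 C(\cdot,v)$ as equal a.e.\ to its decreasing rearrangement, rather than merely refuting non-monotonicity), and it stays within the operator-theoretic framework used throughout the article; both proofs are complete and correct, including the reduction of the stochastically decreasing case by symmetry or via Lemma~\ref{lemma:si_involution_sd}.
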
%

Naturally, Theorem~\ref{thm:characterization_mp_si} also yields that the Markov operator $T_C$ is monotonicity-preserving if and only if
\begin{equation*}
	T_D \circ T_C f \leq T_C f
\end{equation*} 
for all decreasing functions $f \in L^1([0, 1])$ and all Markov operators $T_D$. 

\begin{proof}
We will only show the first equivalence, the second follows similarly and is left to the reader. 
Since
\begin{equation*}
	\cInt{\partial_2 D(u, t)}{0}{v}{t} = D(u, v) \leq C^+(u, v)	= \cInt{\indFunc{[0, u]}(t)}{0}{v}{t}
\end{equation*}
holds for all $u, v$ in $[0, 1]$, an application of Hardy's Lemma (see, Proposition~3.6 in \cite{Bennett.1988}) yields
\begin{equation*}
	(D * C)(u, v)	= \cInt{\partial_2 D(u, t) \partial_1 C(t, v)}{0}{1}{t}
					\leq \cInt{\indFunc{[0, u]}(t) \partial_1 C(t, v)}{0}{1}{t}
					= C(u, v) ~.  
\end{equation*}
Now, let us turn to the converse implication and assume $D * C \leq C$ holds for all $2$-copulas $D$.
Let $v \in (0, 1)$ be arbitrary and set $f(u) := \partial_1 C(u, v)$. 
Using Proposition~3 in \cite{Ryff.1970}, there exists a measure-preserving transformation $\sigma: [0, 1] \rightarrow [0, 1]$ and a decreasing function $g: [0, 1] \rightarrow [0, 1]$ such that
\begin{equation*}
	\partial_1 C(u, v)	= f(u) 
						= g(\sigma(u)) 
						= T_\sigma g (u) ~,
\end{equation*}
where $T_\sigma$ is a left-invertible Markov operator (commonly known as a Koopman operator).
Using Theorem~\ref{thm:cop_mo_correspondence}, $T_\sigma$ corresponds to a left-invertible $2$-copula $C_\sigma$. 
An application of the adjoint $T_\sigma'$ together with the left-inveribility of $T_\sigma$ yields
\begin{equation*}
	g(u)	= T_\sigma' \partial_1 C(\cdot, v) (u)
			= \partial_1 (C_\sigma^\top * C)(u, v) ~.
\end{equation*}
Setting $D := C_\sigma^\top * C$, we have $\partial_1 D(u, v) = g(u)$ almost everywhere.
Therefore $u \mapsto \partial_1 D(u, v)$ is decreasing and fulfils
\begin{equation*}
	D(u, v)	= (C_\sigma^\top * C)(u, v) \leq C(u, v) ~.
\end{equation*}
On the other hand, the Hardy-Littlewood-Inequality (see, (6.1) in \cite{Day.1972}) yields
\begin{equation*}
	C(u, v)	= \cInt{\partial_1 C(t, v)}{0}{u}{t}	
			\leq \cInt{\partial_1 D(t, v)}{0}{u}{t}
			= D(u, v)
			\leq C(u, v) ~.
\end{equation*}
Thus, $C(\cdot, v) = D(\cdot, v)$ and $u \mapsto \partial_1 C(u, v)$ must be decreasing. 
\end{proof}

\begin{remark}
Theorem~\ref{thm:characterization_mp_si} yields an alternative approach to derive the positive quadrant dependence of stochastically increasing copulas.
That is,
\begin{equation*}
	\Pi (u, v) 	= (\Pi * C)(u, v)	\leq C(u, v)  
\end{equation*}
holds for any stochastically increasing $2$-copula $C$.
\end{remark}

\begin{remark} \label{rem:sd_nqd}
Analogously to the previous remark, any stochastically decreasing $2$-copula $C$ is negative quadrant dependent due to 
\begin{equation*}
	C (u, v) 	\leq (\Pi * C)(u, v)	= \Pi(u, v)  ~.
\end{equation*}
\end{remark}

With a similar approach, it also characterizes stochastically increasing, completely dependent copulas.

\begin{remark}
A $2$-copula $C$ is called completely dependent, or left-invertible, if $C^\top * C = C^+$.
Due to Theorem~\ref{thm:characterization_mp_si}, any complete dependent and stochastically increasing copula $C$ fulfils
\begin{equation*}
	C^+ = C^\top * C \leq C^\top \leq C^+ 
\end{equation*}
so that $C = C^+$ holds. 
\end{remark}
\section{Idempotents of stochastically monotone copulas} \label{section:idempotents}

The rest of this article aims to characterize idempotent, stochastically monotone $2$-copulas and monotonicity-preserving conditional expectations.
While the idempotency of $C$ is appears to be a purely algebraic property, it translates to the fundamental stochastic property of $T_C$ being a conditional expectation on $L^\infty$.

It is well-known that any idempotent $2$-copula $C$ is necessarily symmetric, see for example \cite{Darsow.2010} or \cite{Trutschnig.2013b}.
Therefore, whenever it is stochastically monotone in one component, it is stochastically monotone in the same sense in the other component. 
Thus, it suffices to require $C$ to be stochastically monotone in either component and we will simply call $C$ stochastically monotone. 
With this in mind, let us state the main result of this section.

\begin{theorem} \label{thm:sm_copula}
Suppose $C$ is a $2$-copula.
$C$ is stochastically monotone and idempotent if and only if it is an ordinal sum of $\Pi$. 
\end{theorem}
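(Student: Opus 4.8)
The plan is to prove both implications, treating the algebraically easy direction first and then the structural one.

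For the \emph{if} direction, suppose $C = \rbraces{\ordSum{(a_k, b_k)}{\Pi}}_{k \in \mathcal{I}}$. First I would identify its Markov operator explicitly: by Theorem~\ref{thm:cop_mo_correspondence}, $T_C$ replaces a function $f$ by its mean value on each block $[a_k, b_k]$ and leaves $f$ unchanged on the complement $N := [0,1] \setminus \bigcup_k (a_k, b_k)$. This is precisely the conditional expectation $\mathbb{E}(\cdot \mid \mathcal{G})$ with respect to the sub-$\sigma$-algebra $\mathcal{G}$ generated by the blocks together with the Borel subsets of $N$; in particular $T_C \circ T_C = T_C$, so $C$ is idempotent by Proposition~\ref{prop:idempotent_CE}. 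To obtain $C \in \ir{\C}$, I would check directly that $T_C$ is monotonicity-preserving: for decreasing $f$ the block-mean over $[a_k, b_k]$ lies between $f(b_k^-)$ and $f(a_k^+)$, so the only issue is monotonicity across a boundary, which holds because the mean is bounded above by the values of $f$ just left of $a_k$ and below by those just right of $b_k$. Theorem~\ref{thm:characterization} then yields $C \in \ir{\C}$.

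For the \emph{only if} direction, let $C$ be stochastically monotone and idempotent. I would first dispose of the stochastically decreasing case: here $T_C$ is monotonicity-reversing by Theorem~\ref{thm:characterization} and idempotent, so for decreasing $f$ the function $T_C f$ is increasing, while $T_C(T_C f) = T_C f$ forces $T_C f$ to be decreasing as well; hence $T_C f$ is constant. By integral-preservation $T_C f = \int_0^1 f \, \mathrm{d}\lambda$, so $T_C = T_\Pi$ and $C = \Pi$, the degenerate ordinal sum of $\Pi$ (consistent with Remark~\ref{rem:sd_nqd}). Thus I may assume $C \in \ir{\C}$, and by Proposition~\ref{prop:idempotent_CE} write $T_C = \mathbb{E}(\cdot \mid \mathcal{G})$ with $\mathcal{G} = \set{A \in \mathcal{B}([0,1])}{T_C \indFunc{A} = \indFunc{A}}$.

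The core of the argument is to show that the monotonicity-preserving conditional expectation $\mathbb{E}(\cdot \mid \mathcal{G})$ is an \emph{averaging operator} over an interval partition. I would first prove that every positive-measure atom $P$ of $\mathcal{G}$ is, modulo a null set, an interval: if some atom $P$ were not an interval, there would exist $x_1 < y < x_2$ with $x_1, x_2 \in P$ and $y$ in a different atom $P'$; applying $T_C$ to the increasing indicators $\indFunc{[t,1]}$ and using that $T_C$ is constant on atoms and monotonicity-preserving forces the values on $P$ and $P'$ to coincide for every such $t$, hence the normalized measures $\lambda|_P / \lambda(P)$ and $\lambda|_{P'} / \lambda(P')$ to agree, contradicting that $P$ and $P'$ are disjoint. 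Consequently the positive-measure atoms form a countable family of intervals $(a_k, b_k)$ on which $\mathcal{G}$ is trivial, so $T_C$ averages there and the corresponding block of $C$ is $\Pi$.

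The main obstacle is to control the remaining set $N$, where $\mathcal{G}$ has no positive-measure atom, and to show that there $T_C$ must be the identity (producing the $C^+$ pieces of the ordinal sum). I would work with $\phi := T_C\, \mathrm{id} = \mathbb{E}(\mathrm{id} \mid \mathcal{G})$, which is increasing and $\mathcal{G}$-measurable: on $N$ it cannot be locally constant (that would create a positive-measure atom), and I would show that any non-trivial non-atomic coarsening would render $\mathbb{E}(f \mid \mathcal{G})$ non-monotone for a suitable increasing $f$ — as the symmetrization $\tfrac{1}{2}\rbraces{f(x) + f(1-x)}$ illustrates — thereby forcing $\mathcal{G}|_N$ to be the full Borel $\sigma$-algebra and hence $T_C = \mathrm{id}$ on $N$. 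Matching the trivial blocks $(a_k, b_k)$, each contributing a scaled $\Pi$, with identity behaviour on $N$, contributing $C^+$, against the definition of an ordinal sum yields $C = \rbraces{\ordSum{(a_k, b_k)}{\Pi}}_{k \in \mathcal{I}}$. The delicate points will be the null-set bookkeeping in the atom analysis and the rigorous exclusion of non-atomic, non-trivial coarsenings.
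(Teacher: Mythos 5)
Your \enquote{if} direction, your treatment of the stochastically decreasing case, and your observation that positive-measure atoms of $\mathcal{G}$ must be intervals are all sound (the atom argument, with the values $\mathbb{E}(\indFunc{[t,1]} \mid \mathcal{G})$ forced to agree on $P$ and $P'$ for every $t$, can indeed be made rigorous, though you must also handle the case where the separating point $y$ lies in the non-atomic part rather than in another atom). The genuine gap is the step you yourself flag as \enquote{the main obstacle}: the claim that on the non-atomic part $N$ the $\sigma$-algebra $\mathcal{G}$ must contain \emph{all} Borel subsets of $N$, so that $T_C = \mathrm{id}$ there. You assert that \enquote{any non-trivial non-atomic coarsening would render $\mathbb{E}(f \mid \mathcal{G})$ non-monotone for a suitable increasing $f$} and offer the symmetrization $\tfrac12\rbraces{f(x)+f(1-x)}$ as illustration, but an example is not a proof: the statement must be established for \emph{every} non-atomic sub-$\sigma$-algebra (e.g.\ those generated by arbitrary measure-preserving maps or by countable families of non-monotone functions), and nothing in your outline produces the \enquote{suitable increasing $f$} in that generality. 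Note moreover that this claim is essentially Theorem~\ref{thm:conditional_expectation_char} of the paper, which is there derived \emph{as a consequence of} Theorem~\ref{thm:sm_copula}; assuming it as an ingredient inverts the logical order and leaves the hard analytic content unproven.

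For comparison, the paper closes exactly this gap by entirely different means: it first proves the tangent identity $(v - C(v,v))\,\partial_2^- C(u,v) = C(u,v) - C(u, C(v,v))$ for idempotent stochastically increasing copulas (Lemma~\ref{lemma:tangent_property}), exploiting equality in a chain of integral inequalities for $C*C = C$; this forces $\partial_2 C(u,\cdot)$ to be constant on each interval $(C(v,v), v)$, so that $C = \Pi$ whenever $C(v,v) < v$ on all of $(0,1)$ (Lemma~\ref{lemma:idempotent_stochastic_monotone}); the general case is then reduced to this one by decomposing $C$ along its diagonal fixed points into an ordinal sum of ordinally irreducible idempotent blocks, using results of Mesiar--Szolgay and Albanese et al. If you want to salvage your operator-theoretic route, the missing step can be supplied, but it requires an actual argument — for instance, that the range of the conditional expectation is spanned by images of monotone functions, hence $\mathcal{G}$ is generated modulo null sets by monotone ($\mathcal{G}$-measurable) functions and therefore by a family of intervals $[0,a]$, whose generated $\sigma$-algebras have interval atoms. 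As it stands, the central structural assertion of your proof is stated, not proven.
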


We split the proof of Theorem~\ref{thm:sm_copula} into two parts.
We will begin with the result concerning stochastically decreasing copulas.

\begin{proposition} \label{thm:sd_copula}
The product copula $\Pi(u,v)=uv$ is the only idempotent $2$-copula which is stochastically decreasing.
\end{proposition}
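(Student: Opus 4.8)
The plan is to exploit the dichotomy in Corollary~\ref{cor:stochastic_decreasing_markov} together with idempotency to force $C$ to be simultaneously stochastically increasing and decreasing, which then pins down its partial derivative. First I would observe that if $C$ is stochastically decreasing in the first component, then applying Corollary~\ref{cor:stochastic_decreasing_markov}(\ref{cor:stochastic_decreasing_markov_incr}) with $C_1 = C_2 = C$ shows that $C * C$ is stochastically \emph{increasing}. Since $C$ is idempotent, $C * C = C$, so $C$ is at once stochastically increasing and stochastically decreasing. By the definitions of these two classes, this means that for every fixed $v \in [0,1]$ the map $u \mapsto \partial_1 C(u,v)$ coincides almost everywhere with a decreasing function and also with an increasing function.

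Next I would argue that a function which agrees almost everywhere with both a monotone increasing and a monotone decreasing function must be a.e.\ constant: passing to right-continuous modifications and comparing their values at the (co-countably many) common continuity points yields constancy. Hence $\partial_1 C(u,v) = g(v)$ for almost all $u$, where $g$ depends only on $v$. Since $u \mapsto C(u,v)$ is Lipschitz, hence absolutely continuous, with $C(0,v) = 0$, integration gives $C(u,v) = u\, g(v)$, and the boundary condition $C(1,v) = v$ forces $g(v) = v$. Therefore $C = \Pi$.

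I expect the only genuine subtlety to be the elementary measure-theoretic step identifying an a.e.-increasing-and-decreasing function as a.e.\ constant; everything else is a direct application of the closure dichotomy and of the absolute continuity of copulas in each argument. An alternative, essentially equivalent route would combine both directions of Theorem~\ref{thm:characterization_mp_si}: once $C$ is known to belong to both monotone classes one has $D * C = C$ for every copula $D$, and taking $D = \Pi$ together with $\Pi * C = \Pi$ gives $C = \Pi$ at once.
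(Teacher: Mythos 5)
Your proof is correct and follows essentially the same route as the paper's: apply Corollary~\ref{cor:stochastic_decreasing_markov} with $C_1 = C_2 = C$ and idempotency to conclude that $C$ is simultaneously stochastically increasing and decreasing, deduce that $\partial_1 C(\cdot, v)$ is a.e.\ constant in $u$, and integrate using the uniform-margin condition to obtain $C = \Pi$; your explicit treatment of the a.e.-monotone-both-ways step is a fine elaboration of what the paper leaves implicit. Your parenthetical alternative via both directions of Theorem~\ref{thm:characterization_mp_si} together with $\Pi * C = \Pi$ is also valid (and shorter), echoing the identity used in Remark~\ref{rem:sd_nqd}.
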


\begin{proof}
Let $C$ be an arbitrary stochastically decreasing idempotent copula. 
Corollary~\ref{cor:stochastic_decreasing_markov} together with $C$ being idempotent yields that $C = C * C$ is stochastically increasing. 
Thus, $\partial_1 C(u, v) = c_v \in [0, 1]$ must hold for almost all $u \in [0, 1]$, which combined with the uniform margin property of copulas leads to
\begin{equation*}
	v = \cInt{\partial_1 C(u, v)}{0}{1}{u} = \cInt{c_v}{0}{1}{u} = c_v ~.
\end{equation*}
Integrating then gives the assertion $C(u, v) = uv = \Pi(u, v)$. 
\end{proof}

Proposition~\ref{thm:sd_copula} states that the only idempotent copula in the class of stochastically decreasing copulas is the product copula. It is natural to ask whether the same holds true inside the larger class of negative quadrant dependent copulas; see Remark~\ref{rem:sd_nqd}. Indeed, this is the case as the following proposition shows.

\begin{proposition} \label{thm:nqd_copula}
The product copula $\Pi(u,v)=uv$ is the only idempotent $2$-copula which is negative quadrant dependent.
\end{proposition}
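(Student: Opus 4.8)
The plan is to exploit the Hilbert space structure on $L^2([0,1])$ together with the pointwise bound defining negative quadrant dependence. Recall that $C$ is negative quadrant dependent precisely when $C(u,v) \le uv$ for all $u,v \in [0,1]$. If in addition $C$ is idempotent, then $C$ is symmetric, so that the Markov operator of $C$ equals that of $C^\top$; since the latter is the adjoint of $T_C$, the operator $T_C$ is self-adjoint on $L^2([0,1])$. Moreover, Proposition~\ref{prop:idempotent_CE} shows that $T_C$ is a conditional expectation, in particular $T_C \circ T_C = T_C$. A self-adjoint idempotent operator is an orthogonal projection, which is the structural fact I would rely on.

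First I would fix $t \in [0,1]$ and test the projection against the indicator $f_t := \indFunc{[0,t]}$. For an orthogonal projection $P$ one has $\norm{Pf}_2^2 = \ip{Pf}{Pf} = \ip{P^2 f}{f} = \ip{Pf}{f}$, so that $\norm{T_C f_t}_2^2 = \ip{T_C f_t}{f_t}$. Then I would evaluate both sides explicitly. Using $T_C \indFunc{[0,t]}(\cdot) = \partial_1 C(\cdot, t)$ (as in the proof of Theorem~\ref{thm:characterization}), the left-hand side equals $\cInt{\partial_1 C(u,t)^2}{0}{1}{u}$, while the right-hand side equals $\ip{T_C f_t}{f_t} = \cInt{(T_C f_t)(u) \indFunc{[0,t]}(u)}{0}{1}{u} = C(t,t)$.

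Next I would sandwich $C(t,t)$ between two copies of $t^2$. By Cauchy--Schwarz applied to $\partial_1 C(\cdot,t)$ and the constant function $1$, together with the uniform-margin identity $\cInt{\partial_1 C(u,t)}{0}{1}{u} = t$, one obtains
\begin{equation*}
	t^2 = \rbraces{\cInt{\partial_1 C(u,t)}{0}{1}{u}}^2 \le \cInt{\partial_1 C(u,t)^2}{0}{1}{u} = C(t,t) ~,
\end{equation*}
whereas negative quadrant dependence gives $C(t,t) \le t^2$. Hence equality holds throughout, and the equality case of Cauchy--Schwarz forces $\partial_1 C(\cdot, t)$ to be a.e.\ constant; its integral being $t$ fixes that constant as $t$. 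Integrating $\partial_1 C(u,t) = t$ in the first variable yields $C(u,t) = ut$ for all $u$, and since $t \in [0,1]$ was arbitrary, $C = \Pi$.

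The main obstacle is conceptual rather than computational: recognizing that the diagonal values $C(t,t)$ can be read simultaneously as an inner product governed by the projection identity $\norm{T_C f_t}_2^2 = \ip{T_C f_t}{f_t}$, and as a quantity pinned from below by Cauchy--Schwarz and from above by the defining inequality of negative quadrant dependence. The only technical point needing care is the justification that $T_C$ restricts to a self-adjoint operator on $L^2([0,1])$, which follows from the symmetry of idempotent copulas and the transpose--adjoint correspondence underlying Theorem~\ref{thm:cop_mo_correspondence}.
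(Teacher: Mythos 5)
Your proof is correct, and it takes a genuinely different route from the paper's. The paper also begins with the symmetry of idempotent copulas, but then argues via integrated quantities: it applies Prop.~17 of \cite{Siburg.2008} to obtain the Sobolev-norm identity $\norm{C}^2 = 2 \cInt{(C*C)(u,u)}{0}{1}{u} = 2 \cInt{C(u,u)}{0}{1}{u}$, bounds this by $2/3$ using negative quadrant dependence, and then invokes Thm.~18 of \cite{Siburg.2008} (every copula satisfies $\norm{C}^2 \geq 2/3$, with equality only for $\Pi$) to conclude $C = \Pi$. Your argument replaces both external citations by elementary Hilbert-space facts: the orthogonal-projection identity yields the pointwise relation $\cInt{\partial_1 C(u,t)^2}{0}{1}{u} = C(t,t)$ for every $t$, and the Cauchy--Schwarz pinch $t^2 \leq C(t,t) \leq t^2$ together with its equality case plays the role of the extremality theorem, directly forcing $\partial_1 C(\cdot,t) = t$ a.e.\ and hence $C = \Pi$. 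The paper's route is shorter given the cited machinery; yours is self-contained and establishes a stronger intermediate fact (the identity holds for each fixed $t$, not only after integration along the diagonal). Two points deserve care in your write-up: the self-adjointness step needs both that Markov operators restrict to bounded operators on $L^2$ (positivity, unitality and integral preservation make them contractions on $L^1$ and $L^\infty$, hence on $L^2$ by interpolation) and the transpose--adjoint correspondence $T_{C^\top} = T_C^{*}$, which is standard but is not actually part of Theorem~\ref{thm:cop_mo_correspondence} as stated, so it needs a citation or a short verification on indicators. Alternatively, you can sidestep the operator theory entirely: evaluating idempotency on the diagonal gives $C(t,t) = (C*C)(t,t) = \cInt{\partial_2 C(t,s)\, \partial_1 C(s,t)}{0}{1}{s}$, and symmetry gives $\partial_2 C(t,s) = \partial_1 C(s,t)$ a.e., which is exactly your key identity with no mention of $L^2$-boundedness or adjoints.
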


\begin{proof}
Since every idempotent copula is symmetric (see Thm.~6.1 in \cite{Darsow.2010}) we can apply Prop.~17 from \cite{Siburg.2008} with an equality and, together with the assumption that $C(u,v) \leq \Pi(u,v)$, obtain that the so-called Sobolev norm $\|C\|$ satisfies $$ \|C\|^2 = 2 \cInt{(C*C)(u,u)}{0}{1}{u} = 2 \cInt{C(u,u)}{0}{1}{u} \leq 2 \cInt{\Pi(u,u)}{0}{1}{u} = \frac{2}{3} ~. $$ Now Thm.~18 in \cite{Siburg.2008} implies $\|C\|^2 = 2/3$ and, consequently, $C = \Pi$.
\end{proof}

Since Proposition~\ref{thm:sd_copula} already characterizes all idempotent, stochastically decreasing copulas, in the following, it remains to analyze the behaviour of stochastically increasing copulas. 
The next lemma provides a crucial technical property for the proof of Theorem~\ref{thm:sm_copula} by relating the partial derivative of a stochastically increasing copula $C$ to the rate of change from $C(v, v)$ to $v$ of $C$.

\begin{lemma} \label{lemma:tangent_property}
Let $C$ be an idempotent, stochastically increasing $2$-copula.
Then
\begin{equation*} 
	(v - C(v, v))\partial^-_2 C(u, v) = C(u, v) - C(u, C(v, v))
\end{equation*}
holds for all $u,v \in (0, 1)$, where $\partial^-_2 C$ denotes the left-hand derivative of $C$ with respect to the second component. 
\end{lemma}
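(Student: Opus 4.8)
The plan is to translate everything into the language of the Markov operator \(T_C\) and then to reduce the asserted identity to a local affineness statement that can be read off from the conditional‑expectation structure. Since \(C\) is idempotent, Proposition~\ref{prop:idempotent_CE} shows \(T_C=E:=\mathbb{E}(\,\cdot\mid\mathcal{G})\) is a conditional expectation, and (being idempotent) \(C\) is symmetric; by Theorem~\ref{thm:characterization} the stochastic monotonicity of \(C\) means \(E\) is monotonicity‑preserving and each section \(C(u,\cdot)\) is concave. Writing \(\psi_v:=\partial_1 C(\cdot,v)\), which is decreasing and \(\mathcal{G}\)-measurable, symmetry together with idempotency \(C=C*C\) yields the representation \(C(u,v)=\int_0^1 \partial_1 C(t,u)\,\partial_1 C(t,v)\,dt=\langle \psi_u,\psi_v\rangle\), and in particular \(C(v,v)=\|\psi_v\|_2^2\).

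Next I would reformulate the statement. Put \(w:=C(v,v)\le v\). Using symmetry, \(C(u,v)-C(u,w)=\int_w^v \psi_u(s)\,ds\), while \(\partial_2^- C(u,v)=\lim_{s\uparrow v}\psi_u(s)=\psi_u(v^-)\). The left‑tangent inequality for the concave section \(C(u,\cdot)\) on \([w,v]\) gives \(\int_w^v \psi_u \ge (v-w)\,\psi_u(v^-)\) automatically, so the claimed identity is precisely the assertion that \(C(u,\cdot)\) is \emph{affine} on \([w,v]\) for every \(u\), i.e.\ that \(\psi_u\) is \(\lambda\)-a.e.\ constant on \((w,v)\) for every \(u\). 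Since \(\psi_u(s)=K_C(s,[0,u])\), this says exactly that the conditional law \(K_C(s,\cdot)\) does not vary over \(s\in(w,v)\).

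The crux — and the step I expect to be hardest — is to prove this constancy from idempotency and monotonicity alone, namely that the whole interval \((C(v,v),v)\) lies inside a single atom of \(\mathcal{G}\). The mechanism I would use is that a monotonicity‑preserving conditional expectation must average over \emph{intervals}: if some atom contained \(s_1<s_2<s_3\) with \(s_2\) in a different atom, applying \(E\) to increasing test functions would force the normalised Lebesgue laws of the two atoms to coincide, which is impossible for disjoint sets. Knowing the atoms are intervals, I would finish by an explicit computation: if the atom containing \(v\) is \((a,b)\), then \(\psi_v=\mathbbm{1}_{[0,a]}+\tfrac{v-a}{b-a}\,\mathbbm{1}_{(a,b)}\), so that
\[
 C(v,v)=\|\psi_v\|_2^2 = a+\frac{(v-a)^2}{b-a}\in(a,v),
\]
whence \((C(v,v),v)\subseteq(a,b)\); on this single atom every \(\psi_u\) is constant, which yields the constancy and hence the identity. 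The degenerate case, in which \(v\) is an endpoint of an atom or lies in the region where \(\mathcal{G}\) is trivial, forces \(C(v,v)=v\) and makes both sides vanish. A simplification worth attempting first is to integrate the nonnegative tangent defect over \(u\in[0,1]\): this collapses the family \(\{\psi_u\}\) into the single conditional‑mean function \(m=E(\mathrm{id})\) and reduces the task to showing \(m\) is constant on \((C(v,v),v)\), although establishing even this still appears to require the interval structure of the atoms, so I regard that structural input as the genuine obstacle.
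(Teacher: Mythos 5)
Your reduction of the lemma is correct and even illuminating: writing $\psi_u := \partial_1 C(\cdot,u) = T_C\indFunc{[0,u]}$, symmetry and idempotency give $C(u,v)=\ip{\psi_u}{\psi_v}$ and $C(v,v)=\norm{\psi_v}_2^2$, and since each $\psi_u$ is decreasing, the asserted identity is indeed equivalent to $\psi_u$ being a.e.\ constant on $(C(v,v),v)$ for every $u$, i.e.\ to $C(u,\cdot)$ being affine on $[C(v,v),v]$. Your endgame is also sound: if the atom of $\mathcal{G}$ containing $v$ is an interval $(a,b)$ on which $T_C$ averages, then $C(v,v)=a+(v-a)^2/(b-a)\in(a,v)$, so $(C(v,v),v)\subseteq(a,b)$ and every $\mathcal{G}$-measurable $\psi_u$ is a.e.\ constant there; the degenerate case is handled correctly as well.

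The genuine gap is the step you yourself flag as the obstacle: that a monotonicity-preserving conditional expectation on $([0,1],\mathcal{B}([0,1]),\lambda)$ must have order-convex atoms and hence be a countable collection of interval-averaging operators plus the identity. This is precisely Theorem~\ref{thm:conditional_expectation_char}, which the paper obtains as a \emph{consequence} of the present lemma (via Lemma~\ref{lemma:idempotent_stochastic_monotone} and Theorem~\ref{thm:si_copula}); inverting the logic is legitimate only if you give an independent proof of that structure theorem, and the one-sentence interleaving argument does not. Concretely: atoms of a sub-$\sigma$-algebra of the Lebesgue space are typically Lebesgue-null (for the $\sigma$-algebra of Borel sets invariant under $x\mapsto 1-x$, the atoms are the pairs $\{x,1-x\}$), while $Ef$ is only an equivalence class and ``decreasing'' only means it admits a decreasing version — so you are not entitled to evaluate $Ef$ at three prescribed points $s_1<s_2<s_3$ lying in prescribed atoms. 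Making the argument rigorous requires a disintegration/Markov-kernel formulation with a countable family of test functions and control of exceptional null sets, the fact that conditional laws are supported on atoms (a Rokhlin-type statement for countably generated sub-$\sigma$-algebras), and afterwards the additional steps that nondegenerate atoms form a countable family of disjoint intervals carrying normalized Lebesgue measure and that $E$ acts as the identity off their union. All of this can presumably be carried out, but it amounts to proving Theorem~\ref{thm:conditional_expectation_char} from scratch and is substantially harder than the lemma itself. By contrast, the paper's proof is an elementary self-contained computation: expand $(C*C)(u,v)$, use $C(t,v)\le C^+(t,v)$ together with the monotonicity of $\partial_2^- C(u,\cdot)$ to build a chain of inequalities that collapses back to $C(u,v)$ by the change of variables $z=C(t,v)$, and then invoke idempotency to force equality throughout; the asserted identity is exactly the equality case of the final step. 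As it stands, your proposal identifies the right reformulation but leaves its central ingredient unproved.
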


\begin{proof}
Suppose $C$ is stochastically increasing, then the left-hand partial derivative $\partial^-_2 C(u, t)$ exists everywhere and is decreasing.
Furthermore, due to $C(t, v) \leq C^+(t, v) \leq t$, we have
\begin{align*}
	(C * C)(u, v)	&=  \cInt{\partial_2 C(u, t) \partial_1 C(t, v)}{0}{1}{t} 
					= \cInt{\partial^-_2 C(u, t) \partial_1 C(t, v)}{0}{1}{t} \\
				&\leq \cInt{\partial^-_2 C(u, C^+(t, v)) \partial_1 C(t, v)}{0}{1}{t} \\
				&\leq \cInt{\partial^-_2 C(u, C(t, v)) \partial_1 C(t, v)}{0}{v}{t} 
					+ \cInt{\partial^-_2 C(u, C^+(t, v)) \partial_1 C(t, v)}{v}{1}{t}  \\
				&\leq \cInt{\partial^-_2 C(u, C(t, v)) \partial_1 C(t, v)}{0}{1}{t} 
				= \cInt{\partial^-_2 C(u, z)}{C(0, v)=0}{C(1, v)=v}{z}
				= C(u, v) ~.
\end{align*}
The change of variables is possible due to the Riemann-integrability of $t \mapsto \partial_1 C(t, v)$ and $t \mapsto \partial_2 C(u, t)$.
Now, as $(C * C)(u, v) = C(u, v)$ holds, all inequalities are in fact equalities.
This yields
\begin{align*}
	C(u, v)		&= \cInt{\partial^-_2 C(u, C(t, v)) \partial_1 C(t, v)}{0}{v}{t} 
					+ \cInt{\partial^-_2 C(u, C^+(t, v)) \partial_1 C(t, v)}{v}{1}{t} \\
				&= \cInt{\partial^-_2 C(u, C(t, v)) \partial_1 C(t, v)}{0}{v}{t} 
					+ \cInt{\partial^-_2 C(u, v) \partial_1 C(t, v)}{v}{1}{t} \\					
				&= \cInt{\partial^-_2 C(u, z)}{C(0,v)=0}{C(v, v)}{z}
					+ \partial^-_2 C(u, v) \left(v - C(v, v) \right) \\
				&= C(u, C(v, v)) + \partial^-_2 C(u, v) \left(v - C(v, v) \right) ~. \qedhere
\end{align*}
\end{proof}

This property of the partial derivative is the cornerstone to prove the desired characterization of idempotent, stochastically increasing copulas.
The only difficulty remains in the term $(v - C(v, v)) \geq 0$.
Whenever the latter is strictly positive, the following lemma characterizes the corresponding copula completely. 
If $(v - C(v, v)) = 0$, we will need to consider the behaviour of $C$ more closely in Theorem~\ref{thm:si_copula}.

\begin{lemma} \label{lemma:idempotent_stochastic_monotone}
Suppose $C$ is a $2$-copula with $C(v, v) < v$ for all $v \in (0, 1)$.
Then $C$ is stochastically monotone and idempotent if and only if $C(u, v) = uv = \Pi(u, v)$. 
\end{lemma}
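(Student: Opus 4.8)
The plan is to prove the nontrivial \enquote{only if} direction, since the converse is immediate: the independence copula $\Pi$ is idempotent ($\Pi * \Pi = \Pi$), stochastically increasing (as $\partial_1 \Pi(u,v) = v$ is constant, hence decreasing in $u$), and satisfies $\Pi(v,v) = v^2 < v$ for all $v \in (0,1)$. So assume $C$ is stochastically monotone and idempotent with $C(v,v) < v$ on $(0,1)$. If $C$ is stochastically \emph{decreasing}, then Proposition~\ref{thm:sd_copula} already forces $C = \Pi$, so it remains to treat the stochastically \emph{increasing} case, which is the heart of the argument.

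So suppose $C$ is stochastically increasing. First I would exploit Lemma~\ref{lemma:tangent_property}. Because $C(v,v) < v$, the factor $v - C(v,v)$ is strictly positive and I may divide, rewriting the identity as
\[
	\partial^-_2 C(u, v) = \frac{C(u, v) - C(u, C(v,v))}{v - C(v,v)},
\]
which exhibits $\partial^-_2 C(u,v)$ as the slope of the secant of $t \mapsto C(u,t)$ over the interval $[C(v,v), v]$. Next I would use that $C$ is symmetric (being idempotent) and stochastically increasing, so $t \mapsto \partial_2 C(u,t)$ is decreasing and $C(u, \cdot)$ is concave. For a concave function the left derivative at the right endpoint is at most any secant slope taken from the left, with equality precisely when the function is affine on that interval; hence the displayed equality forces $C(u, \cdot)$ to be affine on each interval $[C(v,v), v]$.

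The remaining step is to glue these local affine pieces into global linearity. Since the diagonal $v \mapsto C(v,v)$ is continuous and lies strictly below the identity, for every $t_0 \in (0,1)$ one can choose $v$ slightly above $t_0$ with $C(v,v) < t_0 < v$, so the open intervals $(C(v,v), v)$ cover $(0,1)$. On each such interval $C(u, \cdot)$ is affine, hence $\partial_2 C(u, \cdot)$ is locally constant; as $(0,1)$ is connected, a locally constant function is constant, so $C(u, \cdot)$ is affine on all of $(0,1)$. The boundary values $C(u, 0) = 0$ and $C(u, 1) = u$ then fix slope $u$ and intercept $0$, giving $C(u, v) = uv$. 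Since $u$ is arbitrary, $C = \Pi$.

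The main obstacle is precisely the passage from the pointwise tangent identity of Lemma~\ref{lemma:tangent_property} to global linearity. Two points require care: deducing affineness on $[C(v,v), v]$ demands that equality in the concave mean-value estimate forces a constant derivative, and the gluing relies essentially on the strict inequality $C(v,v) < v$ together with the continuity of the diagonal to guarantee that the intervals $(C(v,v), v)$ genuinely cover the connected set $(0,1)$. The degenerate situation $C(v,v) = v$, which would collapse these intervals, is exactly what the hypothesis excludes and what must be handled separately in the subsequent theorem.
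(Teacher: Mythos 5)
Your proposal is correct and follows essentially the same route as the paper's proof: dispatch the stochastically decreasing case via Proposition~\ref{thm:sd_copula}, then use Lemma~\ref{lemma:tangent_property} to force $\partial_2 C(u,\cdot)$ to be constant (equivalently, $C(u,\cdot)$ affine) on each interval $(C(v,v),v)$, cover $(0,1)$ by these intervals, and recover $C=\Pi$ from the uniform margins. The only cosmetic differences are that you phrase the equality step through the secant-slope characterization of concave functions rather than the paper's integral-average comparison, and you spell out (via continuity of the diagonal) why the intervals cover $(0,1)$, which the paper merely asserts.
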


\begin{proof}
The assertion for stochastically decreasing copulas follows from Proposition~\ref{thm:sd_copula}.
Thus, applying Lemma~\ref{lemma:tangent_property} in combination with $C$ being stochastically increasing, we obtain for arbitrary $u, v$ in $[0, 1]$
\begin{align*}
	\partial^-_2 C(u, v) 	&= \frac{C(u, v) - C(u, C(v, v))}{v - C(v, v)} 
							= \frac{1}{v - C(v, v)} \cInt{\partial^-_2 C(u, t)}{C(v, v)}{v}{t} \\
							&\geq \frac{1}{v - C(v, v)} \cInt{\partial^-_2 C(u, v)}{C(v, v)}{v}{t}
							= \partial^-_2  C(u, v) ~.
\end{align*}
Therefore, the inequality above is actually an equality, and the partial derivative, not only the left-hand derivative, fulfils $\partial_2 C(u, t) =  c_u \in [0, 1]$ almost everywhere on $(C(v, v), v)$. 
Since $C(v,v) < v$ holds for all $v \in (0, 1)$, we obtain a (non-disjoint) covering $\curly{(C(v, v), v)}_{v \in (0, 1)}$ of $(0, 1)$ with intervals having nonempty interior. 
Consequently, we must have $\partial_2 C(u, t) = c_u$ for almost all $t \in (0,1)$. 
Hence,
\begin{equation*}
	u = C(u, 1)	= \int\limits_0^1 \partial_2 C(u, t) \mathrm{d} t 
				= \int\limits_{0}^1 c_u \mathrm{d} t 
				= c_u 
\end{equation*}
which implies $C(u, v) = uv = \Pi(u, v)$. 
\end{proof}

\begin{theorem} \label{thm:si_copula}
Ordinal sums of $\Pi$ are the only idempotent $2$-copulas which are stochastically increasing. 
\end{theorem}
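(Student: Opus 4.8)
The plan is to prove the substantive (forward) direction, namely that every idempotent, stochastically increasing copula $C$ is an ordinal sum of $\Pi$; the converse is routine, since ordinal sums preserve stochastic monotonicity (\cite{Durante.2009}) and the Markov product respects the ordinal-sum decomposition, so that $\rbraces{\ordSum{(a_k, b_k)}{\Pi}}_{k}$ is idempotent because $\Pi$ is. Throughout I would realize $C$ by uniform random variables $U,V$ and exploit the diagonal section $v \mapsto C(v,v)$, which by $C \le C^+$ satisfies $C(v,v) \le v$.

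First I would localize via the set $D := \set{v \in (0,1)}{C(v,v) < v}$. Since copulas are continuous, $v \mapsto v - C(v,v)$ is continuous and nonnegative, so $D$ is open and decomposes as a countable disjoint union $D = \bigsqcup_k (a_k, b_k)$. By maximality and continuity the endpoints obey $C(a_k,a_k)=a_k$ and $C(b_k,b_k)=b_k$, so they are \emph{cut points}; here I call $t$ a cut point when $C(t,t)=t$, which is equivalent to $\curly{U \le t} = \curly{V \le t}$ almost surely, because $\mathbb{P}(U \le t, V > t) = t - C(t,t) = 0$ and symmetrically. The points $0$ and $1$ are trivially cut points as well.

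The core step is to show that on each block $\partial_2 C(u,\cdot)$ is constant. Fixing $u$ and a block $(a_k,b_k)$, for every $v$ in the block we have $v - C(v,v) > 0$, so Lemma~\ref{lemma:tangent_property} together with the (everywhere-existing, decreasing) left derivative $t \mapsto \partial^-_2 C(u,t)$ gives, exactly as in the proof of Lemma~\ref{lemma:idempotent_stochastic_monotone}, that $\partial_2 C(u,\cdot)$ is constant on $(C(v,v),v)$. As $v$ ranges over $(a_k,b_k)$ these intervals form an overlapping cover of the connected set $(a_k,b_k)$ — each $w$ lies in $(C(v,v),v)$ for $v$ slightly above $w$, since $C(v,v) \to C(w,w) < w$ — so the local constants agree and $\partial_2 C(u,t) = c_u$ for all $t \in (a_k,b_k)$. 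I expect this gluing to be the main obstacle: one must upgrade the almost-everywhere statement to a genuine constant (using monotonicity of $\partial^-_2 C(u,\cdot)$) and match the constants arising from different values of $v$.

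It then remains to identify $C$. Integrating gives $C(u,v) = C(u,a_k) + c_u(v-a_k)$ for $u,v \in (a_k,b_k)$, and the cut points at $a_k, b_k$ pin down the boundary values $C(u,a_k)=a_k$ and $C(u,b_k)=u$, whence $c_u = (u-a_k)/(b_k-a_k)$ and
\[
	C(u,v) = a_k + \frac{(u-a_k)(v-a_k)}{b_k-a_k}, \qquad u,v \in (a_k,b_k),
\]
which is precisely the ordinal-sum-of-$\Pi$ block. Finally, for $(u,v)$ outside every square $(a_k,b_k)^2$, say $u \le v$, the segment $[u,v]$ cannot lie entirely in $D$ (else $(u,v)$ would fall in a single square), so it contains a cut point $t$; then $\curly{U \le u} \subseteq \curly{U \le t} = \curly{V \le t} \subseteq \curly{V \le v}$ yields $C(u,v)=u=C^+(u,v)$. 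Combining the two cases shows $C = \rbraces{\ordSum{(a_k, b_k)}{\Pi}}_{k}$, as claimed.
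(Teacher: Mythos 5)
Your proof is correct, but it follows a genuinely different route from the paper's. The paper argues by cases on the diagonal: if $C(v,v)<v$ on all of $(0,1)$ it applies Lemma~\ref{lemma:idempotent_stochastic_monotone} to get $C=\Pi$; if $C(v,v)=v$ everywhere it gets $C=C^+$; and in the mixed case it invokes two external structure theorems --- the decomposition of $C$ into an ordinal sum of ordinally irreducible copulas (Corollaries~3.2 and 3.3 of Mesiar--Szolgay) and the fact that an ordinal sum is idempotent if and only if every summand is (Theorem~3.1 of Albanese--Sempi) --- and then applies Lemma~\ref{lemma:idempotent_stochastic_monotone} to each summand, which satisfies $C_k(v,v)<v$ by ordinal irreducibility. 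You instead build the decomposition by hand: you take the open set $D=\set{v\in(0,1)}{C(v,v)<v}$, write it as a countable union of disjoint open intervals, and rerun the computation from Lemma~\ref{lemma:idempotent_stochastic_monotone} (which, as you correctly note, only needs $C(v,v)<v$ at the single point $v$, via Lemma~\ref{lemma:tangent_property}) locally on each block, gluing the local constants by connectedness exactly as the paper glues them on $(0,1)$; your cut-point argument then identifies each block as a rescaled copy of $\Pi$ and forces $C=C^+$ off the blocks. In effect you inline the proofs of the two cited structure theorems: this buys a self-contained, elementary argument (plus an explicit treatment of the easy converse, which the paper leaves implicit), at the cost of redoing work the paper delegates to the literature. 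The delicate points --- upgrading almost-everywhere constancy of $\partial_2 C(u,\cdot)$ to genuine constancy via monotonicity of the left derivative, the overlapping-cover gluing on each block, and the boundary identifications $C(u,a_k)=a_k$ and $C(u,b_k)=u$ --- are all handled correctly.
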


\begin{proof}
The proof treats three distinct cases, depending on the behaviour along the diagonal.
Suppose $C$ is stochastically increasing and idempotent. 
If $C(v, v) < v$ for all $v \in (0, 1)$, then $C = \Pi = \ordSum{(0, 1)}{\Pi}$ due to Lemma~\ref{lemma:idempotent_stochastic_monotone}.
If on the other hand $C(v, v) = v$ holds for all $v \in (0, 1)$, then $C = C^+ = \ordSum{(a_k, b_k)}{\Pi}_{k \in \emptyset}$. 
Lastly, if $C(v, v) = v$ holds for some $v \in (0, 1)$ and $C \neq C^+$, Corollary~3.2 and 3.3 from \cite{Mesiar.2010} yield that $C$ is the ordinal sum of ordinally irreducible $2$-copulas
\begin{equation*}
	C = \rbraces{\ordSum{(a_k, b_k)}{C_k}}_{k \in \mathcal{I}} ~.
\end{equation*}
Due to Theorem~3.2.1 in \cite{Nelsen.2006}, ordinally irreducible copulas $C_k$ fulfil $C_k(v, v) < v$ for all $v \in (0, 1)$.
Theorem~3.1 from \cite{Albanese.2016} then states that $C$ is idempotent if and only if every $C_k$ is idempotent. 
Moreover, the ordinal sum $C$ is stochastically increasing if and only if every $C_k$ is stochastically increasing.
Thus, every $C_k$ is idempotent, stochastically increasing and fulfils $C_k(v, v) < v$ on $(0, 1)$.
Lemma~\ref{lemma:idempotent_stochastic_monotone} then implies $C_k = \Pi$ which yields 
\begin{equation*}
	C = \rbraces{\ordSum{(a_k, b_k)}{\Pi}}_{k \in \mathcal{I}} ~. \qedhere
\end{equation*} 
\end{proof}

\begin{example}
Apart from the extreme cases $\Pi$ and $C^+$, ordinal sums of $\Pi$ can take various forms.
Three different configurations, namely 
\begin{equation*}
	\rbraces{\ordSum{\rbraces{0, \frac{1}{3}}}{\Pi}, \ordSum{\rbraces{\frac{5}{6}, 1}}{\Pi}}, \ordSum{\rbraces{\frac{1}{3}, 1}}{\Pi} \text{ and } \rbraces{\ordSum{\rbraces{\frac{k}{6}, \frac{k+1}{6}}}{\Pi}}_{k \in \curly{0, \ldots, 5}} ~, 
\end{equation*}
are depicted in Figure~\ref{fig:idempotent_stochastic_decreasing}.
\end{example}

\begin{figure}
\begin{subfigure}[b]{0.32\textwidth}
	\centering
    \resizebox{\linewidth}{!}{\begin{tikzpicture}
	\fill[gray!20!white] (0,0) rectangle (2,2);
	\fill[gray!20!white] (5,5) rectangle (6,6);
	\draw (0,0) node[anchor=north east] {$(0,0)$} -- (6,0) node[anchor=north west] {$(1,0)$} -- (6,6)  node[anchor=south west] {$(1,1)$} -- (0,6)  node[anchor=south east] {$(0,1)$} -- (0,0);
	\draw[dotted] (0,0) -- (2,0) -- (2,2) -- (0,2) -- (0,0);
	\draw[dotted] (2,2) -- (5,2) -- (5,5) -- (2,5) -- (2,2);
	\draw[dotted] (5,5) -- (6,5) -- (6,6) -- (5,6) -- (5,5);
	\draw (2,2) -- (5,5);
\end{tikzpicture}}
    \caption{$\rbraces{\ordSum{(0, \frac{1}{3})}{\Pi}, \ordSum{(\frac{5}{6}, 1)}{\Pi}}\vphantom{\rbraces{\ordSum{(\frac{k}{6}, \frac{k+1}{6})}{\Pi}}_{k \in \curly{0, \ldots, 5}}}$.}
    \label{figSubA:idempotent_stochastic_decreasing0}
\end{subfigure}
\begin{subfigure}[b]{0.32\textwidth}
	\centering
    \resizebox{\linewidth}{!}{\begin{tikzpicture}
	\fill[gray!20!white] (2,2) rectangle (6,6);
	\draw (0,0) node[anchor=north east] {$(0,0)$} -- (6,0) node[anchor=north west] {$(1,0)$} -- (6,6)  node[anchor=south west] {$(1,1)$} -- (0,6)  node[anchor=south east] {$(0,1)$} -- (0,0);
	\draw[dotted] (0,0) -- (2,0) -- (2,2) -- (0,2) -- (0,0);
	\draw[dotted] (2,2) -- (6,2) -- (6,6) -- (2,6) -- (2,2);
	\draw (0,0) -- (2,2);
\end{tikzpicture}}
    \caption{$\ordSum{(\frac{1}{3}, 1)}{\Pi}\vphantom{\ordSum{a}{b}^1_{k \in \curly{1}}}\vphantom{\rbraces{\ordSum{(\frac{k}{6}, \frac{k+1}{6})}{\Pi}}_{k \in \curly{0, \ldots, 5}}}$.}
    \label{figSubA:idempotent_stochastic_decreasingv0}
\end{subfigure}
\begin{subfigure}[b]{0.32\textwidth}
	\centering
    \resizebox{\linewidth}{!}{\begin{tikzpicture}
	\fill[gray!20!white] (0,0) rectangle (1,1);
	\fill[gray!20!white] (1,1) rectangle (2,2);
	\fill[gray!20!white] (2,2) rectangle (3,3);
	\fill[gray!20!white] (3,3) rectangle (4,4);
	\fill[gray!20!white] (4,4) rectangle (5,5);
	\fill[gray!20!white] (5,5) rectangle (6,6);
	\draw (0,0) node[anchor=north east] {$(0,0)$} -- (6,0) node[anchor=north west] {$(1,0)$} -- (6,6)  node[anchor=south west] {$(1,1)$} -- (0,6)  node[anchor=south east] {$(0,1)$} -- (0,0);
	\draw[dotted] (0,0) -- (1,0) -- (1,1) -- (0,1) -- (0,0);
	\draw[dotted] (1,1) -- (2,1) -- (2,2) -- (1,2) -- (1,1);
	\draw[dotted] (2,2) -- (3,2) -- (3,3) -- (2,3) -- (2,2);
	\draw[dotted] (3,3) -- (4,3) -- (4,4) -- (3,4) -- (3,3);
	\draw[dotted] (4,4) -- (5,4) -- (5,5) -- (4,5) -- (4,4);
	\draw[dotted] (5,5) -- (6,5) -- (6,6) -- (5,6) -- (5,5);
\end{tikzpicture}}
    \caption{$\rbraces{\ordSum{(\frac{k}{6}, \frac{k+1}{6})}{\Pi}}_{k \in \curly{0, \ldots, 5}}$.}
    \label{figSubA:idempotent_stochastic_decreasing1}
\end{subfigure}
\caption{Stochastically monotone idempotent ordinal sums of $\Pi$ with respect to different families of disjoint intervals.}
\label{fig:idempotent_stochastic_decreasing}
\end{figure}
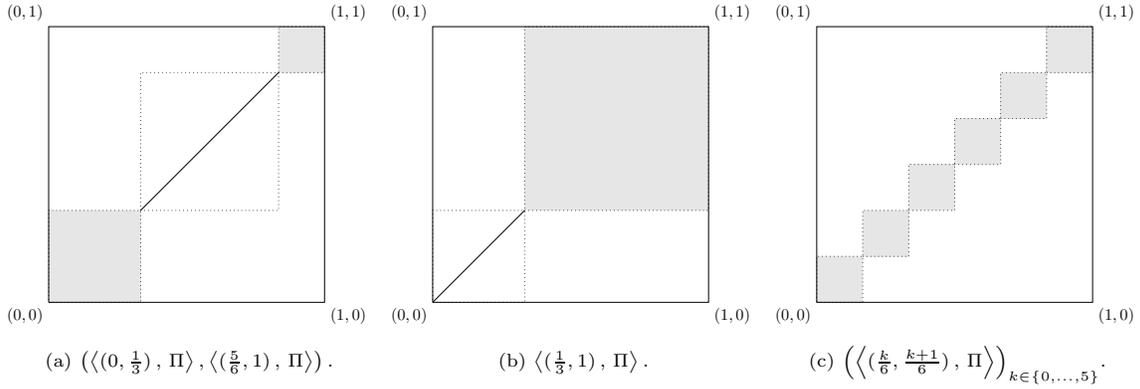

\begin{proposition}
Suppose $C$ is stochastically increasing in the first component. 
Then there exists a family of intervals $(a_k, b_k)_{k \in \mathcal{I}}$ such that
\begin{equation*}
	C^{*n}(u, v) \rightarrow \rbraces{\ordSum{(a_k, b_k)}{\Pi}}_{k \in \mathcal{I}}(u, v)
\end{equation*}
converges pointwise. 
\end{proposition}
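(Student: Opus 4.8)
The plan is to show that the sequence $\rbraces{C^{*n}}_{n \in \mathbb{N}}$ is pointwise decreasing, to identify its pointwise limit $C_\infty$ as a stochastically increasing \emph{idempotent} copula, and then to invoke Theorem~\ref{thm:si_copula} to conclude that $C_\infty$ is an ordinal sum of $\Pi$, from which the desired family of intervals $(a_k, b_k)_{k \in \mathcal{I}}$ is read off.

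First I would establish monotonicity of the iterates. By Corollary~\ref{cor:stochastic_decreasing_markov}, each iterate $C^{*n}$ is again stochastically increasing. Applying Theorem~\ref{thm:characterization_mp_si} to the stochastically increasing copula $C^{*n}$ (in the role of the theorem's $C$) together with the choice $D = C$ gives
\begin{equation*}
	C^{*(n+1)} = C * C^{*n} \leq C^{*n}
\end{equation*}
pointwise. Hence $\rbraces{C^{*n}(u,v)}_n$ is decreasing in $n$ and bounded below by $C^-(u,v) \geq 0$, so it converges pointwise to a limit $C_\infty(u,v)$. Passing to the pointwise limit preserves groundedness, the uniform-margin condition and $2$-increasingness, so $C_\infty$ is again a $2$-copula; since copulas are $1$-Lipschitz, the sequence is equicontinuous and the convergence is in fact uniform. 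Because $\ir{\C}$ is closed with respect to $d_\infty$, the limit $C_\infty$ is stochastically increasing.

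The main obstacle is to prove that $C_\infty$ is idempotent, because the Markov product is not continuous with respect to $d_\infty$ in general; the idea is to exploit stochastic monotonicity in order to upgrade uniform convergence to a mode of convergence under which operator composition behaves well. Passing through the correspondence of Theorem~\ref{thm:cop_mo_correspondence}, I would show that $T_{C^{*n}} \to T_{C_\infty}$ in the strong operator topology. Indeed, by Proposition~\ref{proposition:stochastic_monotone_convergence} the uniform convergence of the stochastically monotone copulas $C^{*n} \to C_\infty$ implies $\partial_1 C^{*n}(\cdot, v) \to \partial_1 C_\infty(\cdot, v)$ pointwise almost everywhere for every $v \in [0,1]$; since $T_C \indFunc{[0,v]} = \partial_1 C(\cdot, v)$ is bounded by $1$, dominated convergence yields $T_{C^{*n}} \indFunc{[0,v]} \to T_{C_\infty} \indFunc{[0,v]}$ in $L^1$. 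By linearity this extends to step functions, and since such functions are dense in $L^1$ while the Markov operators are $L^1$-contractions, a standard $\varepsilon/3$-argument gives $T_{C^{*n}} f \to T_{C_\infty} f$ in $L^1$ for every $f \in L^1$.

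Finally I would use that strong operator convergence, together with the uniform contraction bound $\norm{T_{C^{*n}}} \leq 1$, makes composition continuous along this sequence: for fixed $f$,
\begin{equation*}
	T_{C^{*n}}^2 f - T_{C_\infty}^2 f = T_{C^{*n}}\rbraces{T_{C^{*n}} f - T_{C_\infty} f} + \rbraces{T_{C^{*n}} - T_{C_\infty}} T_{C_\infty} f \longrightarrow 0 ~,
\end{equation*}
so $T_{C^{*n}} \circ T_{C^{*n}} \to T_{C_\infty} \circ T_{C_\infty}$ strongly. By Theorem~\ref{thm:cop_mo_correspondence} one has $T_{C^{*n}} \circ T_{C^{*n}} = T_{C^{*(2n)}}$, and since $C^{*(2n)} \to C_\infty$ uniformly as well, the same reasoning shows $T_{C^{*(2n)}} \to T_{C_\infty}$ strongly. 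Uniqueness of strong limits forces $T_{C_\infty} \circ T_{C_\infty} = T_{C_\infty}$, so $C_\infty$ is idempotent by Proposition~\ref{prop:idempotent_CE}. Being idempotent and stochastically increasing, $C_\infty$ is an ordinal sum of $\Pi$ by Theorem~\ref{thm:si_copula}, which establishes the claim.
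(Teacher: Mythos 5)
Your proposal is correct, and its overall skeleton coincides with the paper's: both use Corollary~\ref{cor:stochastic_decreasing_markov} and Theorem~\ref{thm:characterization_mp_si} to show that $\rbraces{C^{*n}}_n$ is pointwise decreasing, identify the limit as a stochastically increasing copula, prove it is idempotent, and finish with Theorem~\ref{thm:si_copula}. The genuine difference lies in how idempotency of the limit is established. The paper appeals to the fact that the Markov product is \emph{separately} continuous with respect to pointwise convergence (continuity in one component at a time, a classical property going back to \cite{Darsow.1992}): from $C * C^{*n} \to C * C^*$ it deduces $C * C^* = C^*$, and an induction plus continuity in the other component gives $C^* * C^* = \lim_n C^{*n} * C^* = C^*$ in three lines, with no use of monotonicity at this stage. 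You instead bypass any continuity property of $*$ itself: you use Proposition~\ref{proposition:stochastic_monotone_convergence} — which crucially requires the stochastic monotonicity of the iterates — to upgrade $d_\infty$-convergence to strong operator convergence of $T_{C^{*n}}$, and then exploit that composition is sequentially jointly continuous for strongly convergent sequences of $L^1$-contractions, concluding via Proposition~\ref{prop:idempotent_CE} and Theorem~\ref{thm:cop_mo_correspondence}. Your route is longer but more self-contained relative to the paper's own toolbox, and it isolates a fact of independent interest: restricted to stochastically monotone copulas, $d_\infty$-convergence implies strong operator convergence, so the Markov product becomes jointly sequentially continuous there, whereas it is only separately continuous in general. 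The paper's route is shorter and does not need monotonicity for the idempotency step, only for the final application of Theorem~\ref{thm:si_copula}. Both arguments are sound.
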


A similar behaviour was established for Cesáro averages of iterates of quasi-constrictive Markov operators in \cite{Trutschnig.2015}.

\begin{remark}
Since the $2$-copulas $C^{*n}$ are stochastically increasing in the first component (see Corollary~\ref{cor:stochastic_decreasing_markov}), an application of Proposition~\ref{proposition:stochastic_monotone_convergence} yields that the pointwise convergence of
\begin{equation*}
	C^{*n}(u, v) \rightarrow \rbraces{\ordSum{(a_k, b_k)}{\Pi}}_{k \in \mathcal{I}}(u, v)
\end{equation*}
is equivalent to the pointwise convergence of the partial derivatives and the weakly conditional convergence. 
\end{remark}

\begin{proof}
Due to Corollary~\ref{cor:stochastic_decreasing_markov}, $C^{*n}$ is stochastically increasing for all $n \in \mathbb{N}$.
By Theorem~\ref{thm:characterization_mp_si},
\begin{equation*}
	0 \leq C^{*n}(u, v)	= (C * C^{*(n-1)})(u, v) \leq C^{*(n-1)}(u, v)
\end{equation*}
follows for all $u, v \in [0, 1]$. 
Thus, $C^{*n}$ is a decreasing sequence of copulas and as such, converges pointwise against some $C^* \in \Cd{2}$.
The pointwise limit of the concave functions $u \mapsto C^{*n}(u, v)$ is again concave, therefore $C^*$ is stochastically increasing in the first component.
Furthermore, due to the Markov product being continuous with respect to the pointwise convergence in one component, we have that
\begin{equation*}
	C * C^*	= \lim\limits_{n \rightarrow \infty} C * C^{*n} = C^* ~.
\end{equation*} 
holds.
An inductive arguments now yields
\begin{equation*}
	C^* * C^* = \lim\limits_{n \rightarrow \infty} C^{*n} * C^* = \lim\limits_{n \rightarrow \infty} C^* = C^* ~.
\end{equation*}
Thus $C^*$ is idempotent. 
An application of Theorem~\ref{thm:si_copula} then guarantees the existence of a family of intervals $(a_k, b_k)_{k \in \mathcal{I}}$ such that $C^* =\rbraces{\ordSum{(a_k, b_k)}{\Pi}}_{k \in \mathcal{I}}$.   
\end{proof}

Finally, we will translate the results of this section into the language of Markov operators and conditional expectations.
Following Proposition~\ref{prop:idempotent_CE}, a $2$-copula $C$ is idempotent if and only if $T_C$ is a conditional expectation restricted to $L^\infty([0, 1], \mathcal{B}([0, 1]), \lambda)$.
Thus, it follows immediately that a conditional expectation is monotonicity-preserving if and only if it is pointwise either an average operator or the identity.

\begin{theorem} \label{thm:conditional_expectation_char}
Suppose $T$ is a conditional expectation on $L^\infty([0, 1], \mathcal{B}([0, 1]), \lambda)$.
Then $T$ preserves or reverses the monotonicity if and only if there exists a countable family of disjoint intervals $\rbraces{(a_k, b_k)}_{k \in \mathcal{I}}$ in $(0, 1)$ with $P := \cup_{k \in \mathcal{I}} (a_k, b_k)$ such that
\begin{equation*}
	T_C f (u)	= 	\sum\limits_{k = 0}^{\abs{\mathcal{I}}}  \indFunc{(a_k, b_k)}(u) \frac{1}{b_k - a_k} \cInt{f(t)}{a_k}{b_k}{t} + \indFunc{P^C}(u) f(u) ~.
\end{equation*}
\end{theorem}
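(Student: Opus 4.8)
The plan is to translate the statement into the copula picture, apply the classification of idempotent stochastically monotone copulas, and then read off the operator by an explicit kernel computation. First I would invoke Proposition~\ref{prop:idempotent_CE}: since $T$ is a conditional expectation on $L^\infty$, it is of the form $T = T_C$ for an idempotent $2$-copula $C := C_T$. By Property~(\ref{thm:characterization_operator}) of Theorem~\ref{thm:characterization}, this $T_C$ preserves monotonicity exactly when $C \in \ir{\C}$ and reverses it exactly when $C \in \dr{\C}$, so in both cases $C$ is idempotent and stochastically monotone. Theorem~\ref{thm:sm_copula} then forces $C$ to be an ordinal sum of $\Pi$, say $C = \rbraces{\ordSum{(a_k, b_k)}{\Pi}}_{k \in \mathcal{I}}$ for a countable disjoint family $(a_k, b_k)_{k \in \mathcal{I}}$ in $(0, 1)$; in the monotonicity-reversing case Proposition~\ref{thm:sd_copula} collapses this to $C = \Pi$, i.e.\ the single interval $(0, 1)$. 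This reduces the nontrivial implication to computing $T_C$ for such a $C$.

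Second, I would recover $T_C$ from its Markov kernel $K_C$, using $T_C f(u) = \int_0^1 f(t)\, K_C(u, \mathrm{d}t)$ together with the identity $K_C(u, [0, v]) = \partial_1 C(u, v) = T_C\indFunc{[0, v]}(u)$ from Theorem~\ref{thm:cop_mo_correspondence}. For $u \in (a_k, b_k)$ the defining formula of the ordinal sum gives $\partial_1 C(u, v) = (v - a_k)/(b_k - a_k)$ on $(a_k, b_k)$, with $\partial_1 C(u, v) = 0$ for $v \leq a_k$ and $\partial_1 C(u, v) = 1$ for $v \geq b_k$; hence $K_C(u, \cdot)$ is the uniform law on $(a_k, b_k)$ and $T_C f(u) = \frac{1}{b_k - a_k}\cInt{f(t)}{a_k}{b_k}{t}$. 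For $u \in P^C$ no point $(u, v)$ lies in any square $(a_k, b_k)^2$, so $C(u, v) = C^+(u, v) = \min(u, v)$ for every $v$; therefore $\partial_1 C(u, v) = \indFunc{\{u \leq v\}}$, the kernel is the point mass $\delta_u$, and $T_C f(u) = f(u)$. Combining the two cases reproduces the displayed formula exactly.

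For the converse I would argue in reverse: the displayed operator is precisely $T_C$ for $C = \rbraces{\ordSum{(a_k, b_k)}{\Pi}}_{k \in \mathcal{I}}$, which by Theorem~\ref{thm:sm_copula} is idempotent and stochastically increasing, so Theorem~\ref{thm:characterization} yields that $T = T_C$ preserves monotonicity. I expect the only real obstacle to be the rigorous treatment of $P^C$: this set can be closed with empty interior, its open complement having endpoints that accumulate, so the derivative $\partial_1 C(u, v)$ must be handled for almost every $u$ rather than at every point, and one must verify that $K_C(u, \cdot) = \delta_u$ holds on a full-measure subset of $P^C$. Phrasing the whole computation through the Markov kernel, where $\partial_1 C(u, \cdot)$ enters only as a distribution function for a.e.\ $u$, sidesteps any trouble at the countably many interval endpoints and keeps the argument clean.
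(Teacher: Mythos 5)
Your proposal is correct and follows essentially the same route as the paper, which derives Theorem~\ref{thm:conditional_expectation_char} immediately from Proposition~\ref{prop:idempotent_CE} (conditional expectation $\leftrightarrow$ idempotent copula), Theorem~\ref{thm:characterization} (monotonicity-preserving/-reversing $\leftrightarrow$ stochastically monotone), and Theorem~\ref{thm:sm_copula} (idempotent stochastically monotone $\leftrightarrow$ ordinal sum of $\Pi$). Your explicit Markov-kernel computation of $T_C$ for an ordinal sum of $\Pi$, including the $\delta_u$ behaviour on $P^C$, simply fills in the details the paper leaves implicit.
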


\section*{Acknowledgements}

We thank Wolfgang Trutschnig for his constructive comments on an earlier version of this manuscript.  
We thank an anonymous referee for their constructive comments, which lead to Proposition~\ref{thm:nqd_copula} and helped us to improve the article.
The second author wishes to thank Wolfgang Trutschnig and his group at the University of Salzburg for their hospitality and stimulating discussions. 
The second author gratefully acknowledges financial support from the German Academic Scholarship Foundation.

\bibliography{reference}

\end{document}